\newtheorem{thm}{Theorem}[section]
\newtheorem{lem}[thm]{Lemma}
\newtheorem{prop}[thm]{Proposition}
\theoremstyle{definition}
\theoremstyle{remark}
\numberwithin{equation}{section}
\begin{document}

\title[Strichartz estimates for the magnetic Schr\"odinger equation]
    {Strichartz estimates for the magnetic Schr\"odinger equation with potentials $V$ of critical decay}%
\author{Seonghak Kim and Youngwoo Koh}%

\address{Department of Mathematics, Kyungpook National University, Daegu 41566, Republic of Korea}
\email{kimseo14@gmail.com}%

\address{Department of Mathematics Education, Kongju National University, Kongju 32588, Republic of Korea}
\email{ywkoh@kongju.ac.kr}%

\subjclass[2010] {Primary 35Q41, 46E35.}%
\keywords{Strichartz estimates, magnetic Schr\"odinger equation, Fefferman-Phong class.}%

\begin{abstract}
We study the Strichartz estimates for the magnetic Schr\"odinger equation in dimension $n\geq3$.
More specifically, for all Schr\"odinger admissible pairs $(r,q)$, we establish the estimate
    $$
    \|e^{itH}f\|_{L^{q}_{t}(\mathbb{R}; L^{r}_{x}(\mathbb{R}^n))} \leq C_{n,r,q,H} \|f\|_{L^2(\mathbb{R}^n)}
    $$
when the operator $H= -\Delta_A +V$ satisfies suitable conditions.
In the purely electric case $A\equiv0$,
we extend the class of potentials $V$ to the Fefferman-Phong class.
In doing so, we apply a weighted estimate for the Schr\"odinger equation developed by Ruiz and Vega.
Moreover, for the endpoint estimate of the magnetic case in $\mathbb{R}^3$,
we  investigate an equivalence
    $$
    \| H^{\frac{1}{4}} f \|_{L^r(\mathbb{R}^3)} \approx C_{H,r} \big\| (-\Delta)^{\frac{1}{4}} f \big\|_{L^r(\mathbb{R}^3)}
    $$
and find sufficient conditions on $H$ and $r$ for which the equivalence holds.
\end{abstract}
\maketitle


\section{Introduction}

Consider the Cauchy problem of the magnetic Schr\"odinger equation in $\mathbb{R}^{n+1}$ $(n\ge 3)$:
    \begin{equation}\label{mS_eq}
    \begin{cases}
    i\partial_{t}u - Hu=0, \quad (x,t)\in \mathbb{R}^{n} \times \mathbb{R},\\
    u(x,0)=f(x), \quad\, f\in\mathcal S.
    \end{cases}
    \end{equation}
Here, $\mathcal{S}$ is the Schwartz class, and $H$ is the electromagnetic Schr\"odinger operator
    $$
    H = -\nabla_A^2 +V(x), \quad \nabla_A = \nabla - iA(x),
    $$
where $A= (A^1,A^2,\cdots,A^n): \mathbb{R}^{n} \rightarrow \mathbb{R}^{n}$ and  $V:\mathbb{R}^{n} \rightarrow \mathbb{R}$.
The \textit{magnetic field} $B$ is defined by
    $$
    B= DA - (DA)^T\in \mathcal{M}_{n \times n},
    $$
where $(DA)_{ij}= \partial_{x_i} A^j$, $(DA)^T$ denotes the transpose of $DA$, and $\mathcal{M}_{n \times n}$ is the space of $n\times n$ real matrices.
In dimension $n=3$, $B$ is  determined by the cross product with the vector field $\mathrm{curl} A$:
    $$
    Bv = \mathrm{curl} A \times v\quad (v \in \mathbb{R}^3).
    $$

In this paper, we consider the Strichartz type estimate
    \begin{equation}\label{eM result}
    \|u\|_{L^{q}_{t}(\mathbb{R}; L^{r}_{x}(\mathbb{R}^n))} \leq C_{n,r,q,H} \|f\|_{L^2(\mathbb{R}^n)},
    \end{equation}
where $u=e^{itH}f$ is the solution to problem \eqref{mS_eq} with solution operator $e^{itH}$,
and study some conditions on $A$, $V$ and pairs $(r,q)$ for which the estimate holds.

For the unperturbed case  of \eqref{mS_eq} that $A \equiv 0$ and $V \equiv 0$,
Strichartz \cite{St} proved the inequality
    $$
    \|e^{it\Delta}f\|_{L^{\frac{2(n+2)}{n}}(\mathbb{R}^{n+1})} \leq C_n \|f\|_{L^2(\mathbb{R}^{n})},
    $$
where $e^{it\Delta}$ is the solution operator given by
    $$
    e^{it\Delta}f(x)= \frac{1}{(2\pi)^n} \int_{\mathbb{R}^n} e^{ix\cdot\xi + it|\xi|^{2}}\widehat{f}(\xi)d\xi.
    $$
Later, Keel and Tao \cite{KT} generalized this inequality to the following:
    \begin{equation}\label{KT_result}
    \|e^{it\Delta}f\|_{L^{q}_{t}(\mathbb{R}; L^{r}_{x}(\mathbb{R}^n))} \leq C_{n,r,q} \|f\|_{L^2(\mathbb{R}^n)}
    \end{equation}
holds if and only if $(r,q)$ is a Schr\"odinger admissible pair; that is,
$r,q \geq 2$, $(r,q)\neq (\infty,2)$ and $\frac{n}{r} + \frac{2}{q} = \frac{n}{2}$.

In the purely electric case of (\ref{mS_eq}) that $A \equiv 0$, the decay $|V(x)| \sim 1/|x|^2$ has been known to be critical for the validity of the Strichartz estimate.
It was shown by Goldberg, Vega and Visciglia \cite{GVV} that for each $\epsilon>0$,  there is a counterexample of $V=V_\epsilon$ with $|V(x)| \sim |x|^{-2+\epsilon}$ for $|x|\gg 1$ such that the estimate fails to hold.
In a postive direction, Rodnianski and Schlag \cite{RS} proved
    \begin{equation}\label{electiric_result}
    \|u\|_{L^{q}_{t}(\mathbb{R}; L^{r}_{x}(\mathbb{R}^n))} \leq C_{n,r,q,V} \|f\|_{L^2(\mathbb{R}^n)}
    \end{equation}
for non-endpoint admissible pairs $(r,q)$ (i.e., $q>2$) with almost critical decay $|V(x)| \lesssim 1/(1+|x|)^{2+\epsilon}$.
On the other hand, Burq, Planchon, Stalker and Tahvildar-Zadeh \cite{BPST} established \eqref{electiric_result} for critical decay $|V(x)| \lesssim 1/|x|^2$ with some technical conditions on $V$ but the endpoint case included.
Other than these, there have been many related positive results; see, e.g., \cite{GS}, \cite{Go}, \cite{BRV}, \cite{FV} and \cite{BG}.

In regard to the purely electric case, the following is the first main result of this paper whose proof is given in section \ref{sec_A=0}.

\begin{thm}\label{main_thm_1}
Let $n\geq3$ and $A \equiv 0$.
Then there exists a constant $c_n>0$, depending only on $n$, such that
for any $V \in \mathcal{F}^p$ $(\frac{n-1}{2}<p<\frac{n}{2})$ satisfying
    \begin{equation}\label{Cond_Decay_V}
    \|V\|_{\mathcal{F}^p} \leq c_n,
    \end{equation}
estimate \eqref{electiric_result} holds for all $\frac{n}{r} + \frac{2}{q} = \frac{n}{2}$ and $q>2$.
Moreover, if $V\in L^{\frac{n}{2}}$ in addition,
then  \eqref{electiric_result} holds for the endpoint case $(r,q)=(\frac{2n}{n-2},2)$.
\end{thm}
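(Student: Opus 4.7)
The plan is to combine Duhamel's formula, the free Strichartz estimate \eqref{KT_result}, and the weighted $L^{2}$ smoothing estimate of Ruiz and Vega to set up a perturbative bootstrap. Writing $S(t) := e^{it\Delta}$, I use the Ruiz--Vega inequality, which for $(n-1)/2 < p < n/2$ supplies
$$
\|V^{1/2}\, S(\cdot) f\|_{L^{2}_{t} L^{2}_{x}} \le C_{n}\|V\|_{\mathcal{F}^{p}}^{1/2}\|f\|_{L^{2}}.
$$
By $TT^{*}$ the operator $g \mapsto V^{1/2}\int_{\mathbb{R}} S(t-s) V^{1/2} g(s)\, ds$ is then bounded on $L^{2}_{t,x}$ with norm $\le C_{n}\|V\|_{\mathcal{F}^{p}}$, and a standard symmetrization/Christ--Kiselev-type argument yields the forward-in-time analogue with $\int_{0}^{t}$ in place of $\int_{\mathbb{R}}$. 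The first step is to apply $V^{1/2}$ to both sides of the Duhamel identity
$$
u(t) = S(t) f - i\int_{0}^{t} S(t-s)\, V^{1/2}\bigl(V^{1/2} u\bigr)(s)\, ds
$$
and regard the result as an integral equation for $v := V^{1/2}u$ on $L^{2}_{t,x}$. Taking $c_{n}$ small enough that the contraction constant $C_{n}\|V\|_{\mathcal{F}^{p}}$ is strictly less than one, a Neumann series delivers
$$
\|V^{1/2} u\|_{L^{2}_{t,x}} \le C_{n}\|f\|_{L^{2}}.
$$

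Next I insert this bound back into Duhamel and take $\|\cdot\|_{L^{q}_{t} L^{r}_{x}}$ for a non-endpoint admissible pair ($q>2$). The free term is controlled by \eqref{KT_result}. For the Duhamel term, the composition of the adjoint Ruiz--Vega bound with \eqref{KT_result} shows that the untruncated operator $g \mapsto \int_{\mathbb{R}} S(t-s) V^{1/2} g(s)\, ds$ sends $L^{2}_{t,x}$ to $L^{q}_{t} L^{r}_{x}$ with norm $\le C_{n}\|V\|_{\mathcal{F}^{p}}^{1/2}$, and the Christ--Kiselev lemma (valid because $q > 2$) transfers the bound to the truncated kernel $\int_{0}^{t}$. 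Applying this to $g = V^{1/2} u$ and invoking the previous step closes the estimate for every non-endpoint admissible pair.

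The main obstacle is the endpoint $(r,q) = (\tfrac{2n}{n-2}, 2)$, at which Christ--Kiselev breaks down. Here I exploit the additional hypothesis $V \in L^{n/2}$ by splitting the potential symmetrically: H\"older's inequality yields
$$
\|Vu\|_{L^{2}_{t} L^{2n/(n+2)}_{x}} \le \|V\|_{L^{n/2}}^{1/2} \|V^{1/2}u\|_{L^{2}_{t,x}},
$$
and combining this with the Keel--Tao endpoint inhomogeneous Strichartz estimate (available for $n \ge 3$)
$$
\Big\|\int_{0}^{t} S(t-s)\, F(s)\, ds\Big\|_{L^{2}_{t} L^{2n/(n-2)}_{x}} \le C_{n}\|F\|_{L^{2}_{t} L^{2n/(n+2)}_{x}}
$$
together with the bound on $\|V^{1/2}u\|_{L^{2}_{t,x}}$ from the first step delivers the endpoint estimate. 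The decisive point is that one bootstraps against $\|V^{1/2}u\|_{L^{2}_{t,x}}$ rather than $\|u\|_{L^{2}_{t} L^{2n/(n-2)}_{x}}$ itself, so the $L^{n/2}$ norm of $V$ need only be finite, not small.
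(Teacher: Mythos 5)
Your overall strategy — Ruiz--Vega weighted smoothing for the free flow, a bootstrap/Neumann series to get $\| |V|^{1/2} e^{itH}f\|_{L^2_{t,x}}\lesssim\|f\|_{L^2}$, then Christ--Kiselev for $q>2$ and H\"older plus the Keel--Tao endpoint inhomogeneous estimate when $q=2$ — is essentially the paper's argument (modulo the paper working on the dual side for the Duhamel term, which is an equivalent presentation).

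There is, however, one genuine gap, and it sits at the heart of your bootstrap. To run the Neumann series you need the \emph{retarded} (i.e.\ $\int_0^t$) kernel operator
\[
g \longmapsto |V|^{1/2}\int_0^t e^{i(t-s)\Delta}\,|V|^{1/2}g(s)\,ds
\]
to be bounded on $L^2_{t,x}$ with norm $\lesssim \|V\|_{\mathcal F^p}$. You assert this follows from the untruncated $TT^*$ bound by ``a standard symmetrization/Christ--Kiselev-type argument.'' That step is not valid: the Christ--Kiselev lemma requires the time exponent on the target side to be strictly larger than on the source side, and here both are $L^2_t$, which is precisely the excluded diagonal case. Symmetrization alone is also not enough — the self-adjointness of the untruncated kernel $P$ together with $P=P_+ + P_+^*$ does not yield a bound on $P_+$. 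Passing from the free smoothing estimate (equivalently, a bound on $\operatorname{Im}\,|V|^{1/2}R_0(\lambda\pm i0)|V|^{1/2}$) to the retarded estimate (equivalently, a uniform bound on the \emph{full} weighted resolvent $|V|^{1/2}R_0(\lambda\pm i0)|V|^{1/2}$) requires a separate argument; it is exactly this step that Proposition~4.2 in Ruiz--Vega \cite{RV2} supplies, and the paper cites it rather than deriving it. To repair your proof you should either quote that proposition for the $\int_0^t$ version directly, or prove the uniform weighted resolvent bound for $V\in\mathcal F^p$ and deduce the retarded $L^2_{t,x}$ bound via Plancherel in $t$. The remaining steps of your argument — the composition giving the untruncated $L^2_{t,x}\to L^q_tL^r_x$ bound, the Christ--Kiselev truncation for $q>2$, and the endpoint argument using $\|Vu\|_{L^2_tL^{2n/(n+2)}_x}\le\|V\|_{L^{n/2}}^{1/2}\| |V|^{1/2}u\|_{L^2_{t,x}}$ — are all correct and match the paper.
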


Here, $\mathcal{F}^p$ is the Fefferman-Phong class with norm
    $$
    \|V\|_{\mathcal{F}^p}
    = \sup_{r>0,\,x_0\in\mathbb{R}^n} r^2 \Big( \frac{1}{r^n} \int_{B_r(x_0)}|V(x)|^p dx \Big)^{\frac{1}{p}} < \infty,
    $$
which is closed under translation.
From the definition of $\mathcal{F}^p$,
we directly get $L^{\frac{n}{2},\infty} \subset \mathcal{F}^p$ for all $p<\frac{n}{2}$.
Thus the class $\mathcal{F}^p$ $(p<\frac{n}{2})$ clearly contains the potentials of critical decay $|V(x)| \lesssim 1/|x|^2$.
Moreover, $\mathcal{F}^p$ $(p<\frac{n}{2})$ is strictly larger than $L^{\frac{n}{2},\infty}$.
For instance, if the potential function
    \begin{equation}\label{example_Fp}
    V(x)=\phi\big(\frac{x}{|x|}\big) |x|^{-2},\quad \phi\in L^p(S^{n-1}),\quad \frac{n-1}{2}<p<\frac{n}{2},
    \end{equation}
then $V$ need not belong to $L^{\frac{n}{2},\infty}$, but $V\in \mathcal{F}^p$.

According to Theorem \ref{main_thm_1}, for the non-endpoint case,
we do not need any other conditions on  $V$ but its quantitative bound \eqref{Cond_Decay_V},
so that we can extend and much simplify the known results for potentials $|V(x)| \sim 1/|x|^2$
(e.g., $\phi\in L^{\infty}(S^{n-1})$ in \eqref{example_Fp}), mentioned above.
To prove this, we use a weighted estimate developed by Ruiz and Vega \cite{RV2}.
We remark that our proof follows an approach different from those used in the previous works.

Unfortunately, for the endpoint case, we need an additional condition that $V\in L^{\frac{n}{2}}$.
Although $L^{\frac{n}{2}}$ dose not contain the potentials of critical decay,
it still includes those of \emph{almost} critical decay $|V(x)| \lesssim \phi(\frac{x}{|x|})\min (|x|^{-(2+\epsilon)}, |x|^{-(2-\epsilon)})$, $\phi\in L^{\frac{n}{2}}(S^{n-1})$.

In case of dimension $n=3$, we can find a specific bound for $V$, which plays the role of $c_n$ in Theorem \ref{main_thm_1}.
We state this as the second result of the paper.

\begin{thm}\label{main_thm_1'}
If $n=3$, $A \equiv 0$ and $\|V\|_{L^{3/2}} < 2\pi^{1/3}$,
then estimate \eqref{electiric_result} holds for all $\frac{3}{r} + \frac{2}{q} = \frac{3}{2}$ and $q\geq 2$.
\end{thm}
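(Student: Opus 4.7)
The plan is a Duhamel perturbation/absorption argument. The heart of the matter is the endpoint $(r,q)=(6,2)$; all other admissible pairs follow by a short iteration afterwards. The smallness hypothesis $\|V\|_{L^{3/2}}<2\pi^{1/3}$ should be precisely the threshold that lets the Duhamel term be absorbed into the left-hand side.

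Since $u=e^{itH}f$ satisfies $(i\partial_t+\Delta)u=Vu$, Duhamel's principle yields
\[u(t)=e^{it\Delta}f-i\int_0^t e^{i(t-s)\Delta}\bigl(Vu\bigr)(s)\,ds.\]
For the endpoint pair I would combine the Keel--Tao homogeneous estimate $\|e^{it\Delta}f\|_{L^2_tL^6_x}\le C_0\|f\|_{L^2}$ with the double-endpoint inhomogeneous estimate
\[\Bigl\|\int_0^t e^{i(t-s)\Delta}F(s)\,ds\Bigr\|_{L^2_tL^6_x}\le C_1\|F\|_{L^2_tL^{6/5}_x}.\]
H\"older gives $\|Vu\|_{L^{6/5}_x}\le\|V\|_{L^{3/2}}\|u\|_{L^6_x}$, so
\[\|u\|_{L^2_tL^6_x}\le C_0\|f\|_{L^2}+C_1\|V\|_{L^{3/2}}\|u\|_{L^2_tL^6_x}.\]
The quantitative crux of the theorem is the bound $C_1\le(2\pi^{1/3})^{-1}$ in $\mathbb{R}^3$; granted this, the hypothesis forces $C_1\|V\|_{L^{3/2}}<1$ and a standard absorption produces the endpoint Strichartz bound for $e^{itH}$.

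For a general admissible pair $(r,q)$ with $q>2$, the same Duhamel identity combined with the (standard, non-double-endpoint) inhomogeneous Strichartz estimate --- output pair $(r,q)$, input pair $(6/5,2)$ --- gives
\[\|u\|_{L^q_tL^r_x}\le C\|f\|_{L^2}+C\|V\|_{L^{3/2}}\|u\|_{L^2_tL^6_x}.\]
Substituting the endpoint bound for $\|u\|_{L^2_tL^6_x}$ just obtained closes the estimate in all Schr\"odinger admissible pairs, yielding \eqref{electiric_result} for the full range $\frac{3}{r}+\frac{2}{q}=\frac{3}{2}$ and $q\ge 2$.

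The main obstacle is establishing the explicit constant $C_1\le(2\pi^{1/3})^{-1}$, since the Keel--Tao endpoint proof is not designed to be sharp. I expect this identification to proceed by a direct three-dimensional kernel analysis: the sharp dispersive bound $\|e^{it\Delta}\|_{L^1\to L^\infty}=(4\pi|t|)^{-3/2}$ together with Riesz--Thorin interpolation yields $\|e^{it\Delta}\|_{L^{6/5}\to L^6}=(4\pi|t|)^{-1}$, and a careful sharp-constant treatment of the resulting temporal convolution --- whose kernel $|t|^{-1}$ sits exactly at the borderline of the one-dimensional Hardy--Littlewood--Sobolev inequality --- should combine with the $(4\pi)^{-1}$ prefactor to produce the stated value $2\pi^{1/3}=(8\pi)^{1/3}$.
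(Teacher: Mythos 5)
Your overall strategy (Duhamel, H\"older with $\|V\|_{L^{3/2}}$, absorption under a smallness condition) matches the structural skeleton of the paper's argument, and the non-endpoint reduction at the end is fine. However, the heart of your proposal --- the claim that you can pin down a constant $C_1 \le (2\pi^{1/3})^{-1}$ in the double-endpoint inhomogeneous estimate
$$
\Bigl\|\int_0^t e^{i(t-s)\Delta}F(s)\,ds\Bigr\|_{L^2_tL^6_x}\le C_1\|F\|_{L^2_tL^{6/5}_x}
$$
by kernel interpolation and a ``sharp-constant HLS'' step --- has a genuine gap. In $n=3$ with the pair $(6/5,6)$, the fixed-time decay is exactly $\|e^{it\Delta}\|_{L^{6/5}\to L^6}=(4\pi|t|)^{-1}$, so the temporal convolution kernel is $|t|^{-1}$. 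This is not a borderline \emph{case} of one-dimensional Hardy--Littlewood--Sobolev that can be treated carefully; it is precisely where HLS fails, since $|t|^{-1}*\colon L^2_t\to L^2_t$ is unbounded. That failure is the whole reason the endpoint Strichartz estimate was hard and requires the Keel--Tao bilinear decomposition; there is no kernel-level sharp-constant argument to recover. Moreover, even granting boundedness of the retarded operator (which Keel--Tao do provide), no sharp constant for it is known, and the retarded integral $\int_0^t$ has no clean $TT^*$ structure to reduce to, say, an adjoint restriction extremizer problem.

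The paper sidesteps this entirely by never estimating in $L^2_tL^6_x$ directly. After Duhamel and a duality argument, it reduces matters to the weighted smoothing estimate $\|e^{it\Delta}f\|_{L^2_{x,t}(|V|)}\lesssim \|f\|_{L^2}$ in the style of Ruiz--Vega. That quantity admits a clean Plancherel-in-$t$ plus polar-coordinates computation which converts the constant into the extension (adjoint restriction) constant for the \emph{sphere} $S^2$, for which Foschi's sharp Stein--Tomas bound $\|\widehat{gd\sigma}\|_{L^4(\mathbb{R}^3)}\le 2\pi\|g\|_{L^2(S^2)}$ is available; interpolating with the trivial $L^1\to L^\infty$ bound gives an explicit $L^6$ extension constant, and H\"older against $\|V\|_{L^{3/2}}$ produces exactly the threshold $2\pi^{1/3}$. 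In short: your absorption scaffold is consistent with the paper, but the source of the explicit constant is a solved sphere-restriction extremal problem, not a (non-existent) sharp HLS bound for the $|t|^{-1}$ convolution. As written, your step ``should combine \dots to produce the stated value'' is where the proof would break down.
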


To prove this, we use the best constant of the Stein-Tomas restriction theorem in $\mathbb{R}^3$, obtained by Foschi \cite{F},
and apply it to an argument in Ruiz and Vega \cite{RV2}.\\

Next, we consider the general (magnetic) case that $A$ or $V$ can be different from zero.
In this case, the Coulomb decay $|A(x)| \sim 1/|x|$ seems  critical.
(In \cite{FG}, there is a counterexample for $n\geq3$. The case $n=2$ is still open.)
In an early work of Stefanov \cite{Ste},   estimate (\ref{eM result}) for $n\ge 3$ was proved, that is,
    \begin{equation}\label{magnetic_result}
    \|e^{itH}f\|_{L^{q}_{t}(\mathbb{R}; L^{r}_{x}(\mathbb{R}^n))} \leq C_{n,r,q,H} \|f\|_{L^2(\mathbb{R}^n)}
    \end{equation}
for all Schr\"odinger admissible pairs $(r,q)$ under some smallness assumptions on the potentials $A$ and $V$.
Later, for potentials of almost critical decay $|A(x)| \lesssim 1/|x|^{1+\epsilon}$ and $|V(x)| \lesssim 1/|x|^{2+\epsilon}$ $(|x|\gg1)$,
D'Ancona, Fanelli, Vega and Visciglia \cite{DFVV} established \eqref{magnetic_result}
for all Schr\"odinger admissible pairs $(r,q)$ in $n\ge 3$, except the endpoint case $(n,r,q) = (3,6,2)$,
under some technical conditions on $A$ and $V$.
Also, there have been many related positive results; see, e.g., \cite{GST}, \cite{EGS2}, \cite{DF}, \cite{MMT}, \cite{EGS1}, \cite{Go2} and \cite{FFFP}.
Despite all these results, there has been no known positive result on the estimate in case of potentials $A$ of critical decay even in the case $V\equiv 0$. 

Regarding the general case, we state the last result of the paper whose proof is provided in section \ref{sec_V=0}.

\begin{thm}\label{main_thm_2}
Let $n\ge 3$, $A,V \in C^1_{loc}(\mathbb{R}^n\backslash \{0\})$ and $\epsilon>0$.
Assume that the operator $\Delta_A = -(\nabla -iA)^2$ and $H= \Delta_A +V$ are self-adjoint and positive on $L^2$
and that
    \begin{equation}\label{Kato_class}
    \| V_- \|_K = \sup_{x \in \mathbb{R}^n} \int \frac{|V_-(y)|}{|x-y|^{n-2}} dy
    ~< \frac{\pi^{n/2}}{\Gamma(\frac{n}{2}-1)} .
    \end{equation}
Assume also that there is a constant $C_{\epsilon}>0$
such that $A$ and $V$ satisfy the almost critical decay condition
    \begin{equation}\label{Cond_2'}
    \begin{aligned}
    |A(x)|^2 + |V(x)|
    \leq C_{\epsilon} \min\big( \frac{1}{|x|^{2-\epsilon}}, \frac{1}{|x|^{2+\epsilon}}\big) ,
    \end{aligned}
    \end{equation}
and the Coulomb gauge condition
    \begin{equation}\label{Cond_Coulomb}
    \nabla\cdot A =0.
    \end{equation}
Lastly, for the \textit{trapping component} of $B$ as
$B_\tau (x) = (x/|x|) \cdot B(x)$,
assume that
	\begin{equation}\label{Cond_B_n=3}
	\int_0^\infty \sup_{|x|=r} |x|^3 \big| B_\tau (x) \big|^2 dr
	+ \int_0^\infty \sup_{|x|=r} |x|^2 \big| \big( \partial_r V (x) \big)_{+} \big| dr
	< \frac{1}{M}\quad\mbox{if}\quad n=3,
	\end{equation}
for some $M>0$, and that
    \begin{equation}\label{Cond_B_n>3}
	\Big\| |x|^2 B_\tau (x) \Big\|_{L^{\infty}}^2
	+ 2 \Big\| |x|^3 \big(\partial_r V (x) \big)_{+} \Big\|_{L^{\infty}}
	< \frac{2(n-1)(n-3)}{3}\quad\mbox{if}\quad n\geq4.
	\end{equation}
Only for $n=3$, we also assume the boundness of the imagnary power of $H$: 
    \begin{equation}\label{Cond_BMO}
    \| H^{iy} \|_{BMO \rightarrow BMO_H} \leq C (1+|y|)^{3/2}.
    \end{equation}
Then we have
    \begin{equation}\label{result_Thm2}
    \|e^{itH}f\|_{L^{q}_{t}(\mathbb{R}; L^{r}_{x}(\mathbb{R}^n))} \leq C_{n,r,q,H,\epsilon} \|f\|_{L^2(\mathbb{R}^n)},
    \quad \frac{n}{r} + \frac{2}{q} = \frac{n}{2} \quad\mbox{and}\quad q \geq 2 .
    \end{equation}
\end{thm}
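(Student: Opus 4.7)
The plan is to combine Kato-type local smoothing estimates for $e^{itH}$ with free Strichartz estimates via a perturbative Duhamel argument, handling the $n=3$ endpoint separately through an equivalence of Sobolev norms.

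First I would establish local smoothing via the multiplier (Morawetz) method. Choosing a suitable radial weight $\phi=\phi(|x|)$ and computing the virial identity
$$\frac{d}{dt}\langle[iH,\phi]u,u\rangle = (\text{positive quadratic form in }\nabla_A u) + (\text{error terms}),$$
the error terms are exactly those involving the trapping component $B_{\tau}=(x/|x|)\cdot B(x)$ and the non-repulsive part $(\partial_r V)_+$, so conditions \eqref{Cond_B_n=3} and \eqref{Cond_B_n>3} are precisely what is required to absorb them into the positive term. The Coulomb gauge \eqref{Cond_Coulomb} simplifies the commutator computation, and the Kato bound \eqref{Kato_class} prevents negative spectrum (so $H$ has well-defined positive powers). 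The outcome is a weighted smoothing estimate of the form
$$\sup_{R>0}\frac{1}{R}\int_\mathbb{R}\int_{|x|<R}|\nabla_A e^{itH}f|^2\,dx\,dt + \bigl\||x|^{-1}e^{itH}f\bigr\|_{L^2_{t,x}}^2 \lesssim \|H^{1/4}f\|_{L^2}^2,$$
along with its dual inhomogeneous version.

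Next, for non-endpoint admissible pairs, I would perturb off the free flow. Writing $H = -\Delta + W$ with $W=-2iA\cdot\nabla + |A|^2 + V$ (the $\nabla\!\cdot\! A$ term vanishing by \eqref{Cond_Coulomb}), the almost-critical decay \eqref{Cond_2'} gives precisely the weights needed to control $W e^{isH}f$ in the norms dual to the smoothing spaces. Then free Strichartz applied to $e^{it\Delta}f$ plus the Christ--Kiselev lemma (valid for $q>2$) closes the Duhamel iteration. For the endpoint when $n\geq 4$, Christ--Kiselev is not required and the Keel--Tao bilinear $TT^*$ argument couples cleanly with the smoothing estimate to give $(r,q)=(\frac{2n}{n-2},2)$.

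The main obstacle is the endpoint $(r,q)=(6,2)$ when $n=3$, where both Christ--Kiselev and the standard Keel--Tao perturbation fail. To circumvent this, I would exploit assumption \eqref{Cond_BMO}: by complex interpolation between the trivial $L^2$--boundedness of $H^{iy}$ (from self-adjointness and spectral calculus) and the BMO bound in \eqref{Cond_BMO}, one obtains the norm equivalence
$$\|H^{1/4}g\|_{L^r(\mathbb{R}^3)} \approx \|(-\Delta)^{1/4}g\|_{L^r(\mathbb{R}^3)}$$
for the relevant $r$, as flagged in the abstract. This equivalence, combined with the smoothing estimate of the first step read at the correct Sobolev level and the free endpoint Strichartz $\|e^{it\Delta}\|_{L^2\to L^2_tL^6_x}$, allows one to transfer the endpoint bound from $-\Delta$ to $H$. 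The delicate point I expect to dominate the work is verifying that the equivalence interacts correctly with the time-dependent operator $e^{itH}$, i.e.\ that one may commute $H^{1/4}$ through the flow and still recover the free endpoint via intertwining modulo terms absorbed by smoothing.
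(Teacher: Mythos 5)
Your high-level roadmap (smoothing + free Strichartz + perturbation + norm equivalence for the $n=3$ endpoint) is the right skeleton, and you correctly identify the roles of the trapping/repulsive conditions and the BMO hypothesis. But there are two concrete gaps where the details you sketch would not close.

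First, the perturbation step cannot be run at the level you describe. The forcing term in Duhamel's formula is $R(x,\nabla)e^{isH}f$ with a genuine first-order piece $2iA\cdot\nabla_A e^{isH}f$, and the Fanelli--Vega smoothing estimate only controls $\nabla_A u$ in dyadic-weighted $L^2_{t,x}$. To feed this into a Strichartz norm one has to gain back half a derivative on the inhomogeneous term. The paper does this with the Ruiz--Vega/Ionescu--Kenig estimate
\begin{equation*}
\Big\| |\nabla|^{\frac12}\int_0^t e^{i(t-s)\Delta}F(\cdot,s)\,ds\Big\|_{L^q_tL^r_x}
\leq C\sum_{j\in\mathbb{Z}}2^{j/2}\|\chi_{C_j}F\|_{L^2_{x,t}},
\end{equation*}
valid for \emph{all} admissible pairs including the endpoint, which is why Christ--Kiselev is never invoked in the paper's proof of Theorem~\ref{main_thm_2}. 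Your plan ("free Strichartz plus Christ--Kiselev closes the Duhamel iteration", "Keel--Tao $TT^*$ couples cleanly for $n\geq 4$") does not explain where the derivative balancing the gradient in $A\cdot\nabla_A u$ comes from; without the $|\nabla|^{1/2}$ weight on the left the dyadic $2^{j/2}$ weights on the right do not match the almost-critical decay \eqref{Cond_2'}, and the sum over $j$ diverges. You also state the smoothing estimate with $\|H^{1/4}f\|_{L^2}$ on the right, but the Fanelli--Vega theorem produces $\|(-\Delta_A)^{1/4}f\|_{L^2}$; converting requires the very $L^2$-equivalence $\|(-\Delta_A)^{1/4}f\|_{L^2}\approx\|H^{1/4}f\|_{L^2}$ you need to establish separately.

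Second, your interpolation for the norm equivalence is not the right one. Interpolating ``trivial $L^2$-boundedness of $H^{iy}$'' against the $BMO\to BMO_H$ bound \eqref{Cond_BMO} would at best produce information about $H^{iy}$ alone, not about the comparison operator $T_z=H^z(-\Delta)^{-z}$. The paper applies Stein interpolation to the analytic family $T_z$ on the strip $0\leq\operatorname{Re} z\leq1$: at $\operatorname{Re} z=0$ one uses \eqref{Cond_BMO} to get $L^\infty\to BMO_H$ boundedness of $T_{iy}$, and at $\operatorname{Re} z=1$ one needs the nontrivial CKN-type bound $\|H(-\Delta)^{-1}f\|_{L^p}\lesssim\|f\|_{L^p}$ for $1<p\leq 3/2$, which is established via the Sawyer--Wheeden weighted fractional-integral characterization (Lemma~\ref{lem_SW}) together with Lorentz-space H\"older and Sobolev. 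Evaluating at $z=1/4$ then yields $\|H^{1/4}f\|_{L^p}\lesssim\|(-\Delta)^{1/4}f\|_{L^p}$ precisely up to $p=6$, which is the point of all the work; your sketch skips the $z=1$ endpoint entirely, and that is where the almost-critical decay \eqref{Cond_2'} and the weight-characterization enter.
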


Note that this result covers the endpoint case $(n,r,q)=(3,6,2)$; but the conclusions for the other cases are the same as in \cite{DFVV}.
Here, $V_{\pm}$ denote the positive and negative parts of $V$, respectively; that is, $V_+=\max\{V,0\}$ and $V_-=\max\{-V,0\}$.
Also, we say that a function $V$ is of Kato class if
    $$
    \| V \|_K := \sup_{x \in \mathbb{R}^n} \int \frac{|V(y)|}{|x-y|^{n-2}} dy ~< \infty,
    $$
and $\Gamma$ in \eqref{Kato_class} is the gamma function,  defined by
$\Gamma(\alpha)= \int_{0}^\infty x^{\alpha-1} e^{-x} dx$.
The last condition \eqref{Cond_BMO} for $n=3$ may seem a bit technical but not be artificial.
For instance, by Lemma 6.1 in \cite{DDSY},
we know that $\| H^{iy} \|_{L^{\infty} \rightarrow BMO_H} \leq C (1+|y|)^{3/2}$ using only \eqref{Kato_class}.
Also, there are many known sufficient conditions to extend such an estimate to $BMO\rightarrow BMO$, like the translation invariant operator (See \cite{P}). For the definition and some basic properties of $BMO_H$ space, see section \ref{sec_CKN_ineq}.

The rest of the paper is organized as follows.
In section \ref{sec_A=0}, we prove Theorems \ref{main_thm_1} and \ref{main_thm_1'}. An equivalence of norms regarding $H$ and $-\Delta$ in $\mathbb{R}^3$ is investigated in section \ref{sec_CKN_ineq}. Lastly, in section \ref{sec_V=0}, Theorem \ref{main_thm_2} is proved.




\section{The case $A\equiv0$: Proof of Theorem \ref{main_thm_1} and \ref{main_thm_1'}}\label{sec_A=0}

In this section,  the proof of Theorems \ref{main_thm_1} and \ref{main_thm_1'} is provided.
Let $n\geq3$, and consider the purely electric Schr\"odinger equation in $\mathbb{R}^{n+1}$:
    \begin{equation}\label{mS_eq_A=0}
    \begin{cases}
    i\partial_{t}u +\Delta u= V(x)u, \quad (x,t)\in \mathbb{R}^{n} \times \mathbb{R},\\
    u(x,0)=f(x), \quad\qquad\, f\in\mathcal S.
    \end{cases}
    \end{equation}
By Duhamel's principle, we have a formal solution to problem \eqref{mS_eq_A=0} given by
    $$
    u(x,t)= e^{itH}f(x) =e^{it\Delta}f(x) - i \int_{0}^{t}e^{i(t-s)\Delta} V(x) e^{isH}f ds .
    $$
From the standard Strichartz estimate \eqref{KT_result}, there holds
    $$
    \begin{aligned}
    \|e^{itH}f\|_{L^{q}_{t}(\mathbb{R}; L^{r}_{x}(\mathbb{R}^n))}
    &\leq C_{n,r,q} \|f\|_{L^2(\mathbb{R}^n)}
        + \Big\| \int_{0}^{t}e^{i(t-s)\Delta} V(x) e^{isH}f ds \Big\|_{L^{q}_{t}(\mathbb{R}; L^{r}_{x}(\mathbb{R}^n))}
    \end{aligned}
    $$
for all Schr\"odinger admissible pairs $(r,q)$.
Thus it is enough to show that
    \begin{equation}\label{A=0_reduce_1}
    \Big\| \int_{0}^{t}e^{i(t-s)\Delta} V(x) e^{isH}f ds \Big\|_{L^{q}_{t}(\mathbb{R}; L^{r}_{x}(\mathbb{R}^n))}
    \leq C_{n,r,q,V} \| f \|_{L^2(\mathbb{R}^n)} .
    \end{equation}
for all Schr\"odinger admissible pairs $(r,q)$.

By the duality argument, estimate \eqref{A=0_reduce_1} is equivalent to
    $$
    \int_{\mathbb{R}} \int_{0}^{t} \Big\langle e^{i(t-s)\Delta} \big( V(x) e^{isH}f \big), G(\cdot,t) \Big\rangle_{L^2_x} ds dt
    \leq C \| f \|_{L^2(\mathbb{R}^n)} \| G \|_{L^{q'}_{t}(\mathbb{R}; L^{r'}_{x}(\mathbb{R}^n))}.
    $$
Now, we consider the left-hand side of this inequality.
Commuting the operator and integration, we have
    $$
    \begin{aligned}
    &\int_{\mathbb{R}} \int_{0}^{t} \Big\langle e^{i(t-s)\Delta} \big( V(x) e^{isH}f \big), G(\cdot,t) \Big\rangle_{L^2_x} ds dt \\
    &\quad\quad= \int_{\mathbb{R}} \int_{0}^{t} \Big\langle V(x) e^{isH}f, e^{-i(t-s)\Delta} G(\cdot,t) \Big\rangle_{L^2_x} ds dt \\
    &\quad\quad= \int_{\mathbb{R}} \Big\langle V(x) e^{isH}f, \int_{s}^{\infty} e^{-i(t-s)\Delta} G(\cdot,t)dt \Big\rangle_{L^2_x} ds.
    \end{aligned}
    $$
By H\"older's inequality, we have
    $$
    \begin{aligned}
    &\int_{\mathbb{R}} \Big\langle V(x) e^{isH}f, \int_{s}^{\infty} e^{-i(t-s)\Delta} G(\cdot,t)dt \Big\rangle_{L^2_x} ds \\
    &\qquad\leq \big\| e^{isH}f \big\|_{L^2_{x,s} (|V|)}
        \Big\| \int_{s}^{\infty} e^{-i(t-s)\Delta} G (\cdot,t) dt \Big\|_{L^2_{x,s} (|V|)}.
    \end{aligned}
    $$
Thanks to  \cite[Theorem 3]{RV2}, for any $\frac{n-1}{2}<p<\frac{n}{2}$, we have
    \begin{equation}\label{RV2_result}
    \big\| e^{itH}f \big\|_{L^2_{x,t} (|V|)}
    \leq C_{n} \|V\|_{\mathcal{F}^p}^{\frac{1}{2}} \| f \|_{L^2(\mathbb{R}^n)}
    \end{equation}
if condition \eqref{Cond_Decay_V} holds for some suitable constant $c_n$.
More specifically, by Propositions 2.3 and 4.2 in \cite{RV2}, we have
    $$
    \begin{aligned}
    \big\| e^{itH}f \big\|_{L^2_{x,t} (|V|)}
    &\leq \big\| e^{it\Delta}f \big\|_{L^2_{x,t} (|V|)}
        + \Big\| \int_{0}^{t}e^{i(t-s)\Delta} V(x) e^{isH}f ds \Big\|_{L^2_{x,t} (|V|)} \\
    &\leq C_1 \|V\|_{\mathcal{F}^p}^{\frac{1}{2}} \| f \|_{L^2}
        + C_2 \|V\|_{\mathcal{F}^p} \big\| V(x) e^{itH}f \big\|_{L^2_{x,t}(|V|^{-1})} \\
    &= C_1 \|V\|_{\mathcal{F}^p}^{\frac{1}{2}} \| f \|_{L^2}
        + C_2 \|V\|_{\mathcal{F}^p} \| e^{itH}f \|_{L^2_{x,t}(|V|)}.
    \end{aligned}
    $$
Thus, if $\|V\|_{\mathcal{F}^p} \leq 1/(2C_2)=:c_n$, we get
    $$
    \big\| e^{itH}f \big\|_{L^2_{x,t} (|V|)}
    \leq C_1 \|V\|_{\mathcal{F}^p}^{\frac{1}{2}} \| f \|_{L^2} + \frac{1}{2}\| e^{itH}f \|_{L^2_{x,t}(|V|)},
    $$
and this implies \eqref{RV2_result} by setting $C_n=2C_1$.
As a result, we can reduce \eqref{A=0_reduce_1} to
    \begin{equation}\label{CK_goal}
    \Big\| \int_{t}^{\infty} e^{i(t-s)\Delta} G (\cdot,s) ds \Big\|_{L^2_{x,t} (|V|)}
    \leq C_{n,r,q,V} \| G \|_{L^{q'}_{t}(\mathbb{R}; L^{r'}_{x}(\mathbb{R}^n))}.
    \end{equation}

It now remains to establish (\ref{CK_goal}).
First, from \cite[Proposition 2.3]{RV2} and the duality of Keel-Tao's result \eqref{KT_result},
we know
    \begin{equation}\label{CK_reduce}
    \begin{aligned}
    \Big\| \int_{\mathbb{R}} e^{i(t-s)\Delta} G (\cdot,s) ds \Big\|_{L^2_{x,t}(|V|)}
    &=\Big\| e^{it\Delta} \int_{\mathbb{R}} e^{-is\Delta} G (\cdot,s) ds \Big\|_{L^2_{x,t}(|V|)} \\
    &\leq C_n \|V\|_{\mathcal{F}^p}^{\frac{1}{2}} \Big\| \int_{\mathbb{R}} e^{-is\Delta} G (\cdot,s) ds \Big\|_{L^2_x} \\
    &\leq C_{n,r,q} \|V\|_{\mathcal{F}^p}^{\frac{1}{2}} \| G \|_{L^{q'}_t L^{r'}_x}
    \end{aligned}
    \end{equation}
for all Schr\"odinger admissible pair $(r,q)$.
In turn, \eqref{CK_reduce} implies
    \begin{equation}\label{CK_reduce_2}
    \Big\| \int_{-\infty}^{t} e^{i(t-s)\Delta} G (\cdot,s) ds \Big\|_{L^2_{x,t} (|V|)}
    \leq C_{n,r,q} \|V\|_{\mathcal{F}^p}^{\frac{1}{2}} \| G \|_{L^{q'}_{t}(\mathbb{R}; L^{r'}_{x}(\mathbb{R}^n))}
    \end{equation}
by the Christ-Kiselev lemma \cite{CK} for $q>2$.
Combining \eqref{CK_reduce} with \eqref{CK_reduce_2}, we directly get \eqref{CK_goal} for $q>2$.
Next, for the endpoint case $(r,q)=(\frac{2n}{n-2},2)$, we have
    \begin{equation}\label{CK_reduce_3}
    \begin{aligned}
    \Big\| \int_{-\infty}^{t} e^{i(t-s)\Delta} G (\cdot,s) ds \Big\|_{L^2_{x,t} (|V|)}
    &\leq \|V\|_{L^{\frac{n}{2}}_x}^{\frac{1}{2}} \Big\| \int_{-\infty}^{t} e^{i(t-s)\Delta} G (\cdot,s) ds \Big\|_{L^{2}_{t}(\mathbb{R}; L^{\frac{2n}{n-2}}_{x}(\mathbb{R}^n))} \\
    &\leq C_n \|V\|_{L^{\frac{n}{2}}_x}^{\frac{1}{2}} \| G \|_{L^{2}_{t}(\mathbb{R}; L^{\frac{2n}{n+2}}_{x}(\mathbb{R}^n))}
    \end{aligned}
    \end{equation}
from H\"older's inequality in $x$ with the inhomogeneous Strichartz estimates by Keel-Tao.
Observe now that \eqref{CK_reduce_3} implies \eqref{CK_goal} when $q=2$ with the assumption $V\in L^{\frac{n}{2}}_x$.

The proof of Theorem \ref{main_thm_1} is now complete.\\

Now, we will find a suitable constant in Theorem \ref{main_thm_1'}.
For this, we refine estimate \eqref{RV2_result} based on an argument in \cite{RV2}.
We recall the Fourier transform in $\mathbb{R}^n$, defined by
    $$
    \widehat{f}(\xi)=\int_{\mathbb{R}^n} e^{ix\cdot\xi} f(x)dx,
    $$
and its basic properties
    $$
    f(x) = \frac{1}{(2\pi)^{n}} \int_{\mathbb{R}^n} e^{ix\cdot\xi} \widehat{f}(\xi)d\xi
    \quad\mbox{and}\quad
    \| f\|_{L^2(\mathbb{R}^n)} = \frac{1}{(2\pi)^{\frac{n}{2}}} \big\| \widehat{f} \big\|_{L^2(\mathbb{R}^n)} .
    $$
Thus, we can express $e^{it\Delta}f$ using the polar coordinates with $r^2=\lambda$ as follows:
    $$
    \begin{aligned}
    e^{it\Delta}f
    &= \frac{1}{(2\pi)^{n}} \int_0^\infty e^{itr^2} \int_{S^{n-1}_r} e^{ix\cdot\xi} \widehat{f}(\xi) d\sigma_r(\xi) dr \\
    &= \frac{1}{2(2\pi)^{n}} \int_0^\infty e^{it\lambda} \int_{S^{n-1}_{\sqrt{\lambda}}} e^{ix\cdot\xi} \widehat{f}(\xi) d\sigma_{\sqrt{\lambda}}(\xi) \lambda^{-\frac{1}{2}} d\lambda.
    \end{aligned}
    $$
Take $F$ as
    $$
    F(\lambda)
    = \int_{S^{n-1}_{\sqrt{\lambda}}} e^{ix\cdot\xi} \widehat{f}(\xi) d\sigma_{\sqrt{\lambda}}(\xi) \lambda^{-\frac{1}{2}}
    $$
if $\lambda\geq0$ and $F(\lambda)=0$ if $\lambda<0$.
Then, by Plancherel's theorem in $t$, we get
    $$
    \begin{aligned}
    \big\| e^{it\Delta}f \big\|_{L^2_{x,t} (|V|)}^2
    &= \frac{2\pi}{4(2\pi)^{2n}} \int_{\mathbb{R}^n} \Big( \int_{\mathbb{R}} |F(\lambda)|^2 d\lambda \Big) |V(x)|dx \\
    &= \frac{\pi}{2(2\pi)^{2n}} \int_{\mathbb{R}^n} \Big( \int_{0}^{\infty} \Big| \int_{S^{n-1}_{\sqrt{\lambda}}} e^{ix\cdot\xi} \widehat{f}(\xi) d\sigma_{\sqrt{\lambda}}(\xi) \Big|^2 \lambda^{-1} d\lambda \Big) |V(x)|dx \\
    &= \frac{\pi}{(2\pi)^{2n}} \int_{0}^{\infty} \Big( \int_{\mathbb{R}^n} \Big| \int_{S^{n-1}_r} e^{ix\cdot\xi} \widehat{f}(\xi) d\sigma_r(\xi) \Big|^2 |V(x)|dx \Big) r^{-1} dr .
    \end{aligned}
    $$
Now, we consider the $n=3$ case and apply the result on the best constant of the Stein-Tomas restriction theorem in $\mathbb{R}^3$ obtained by Foschi \cite{F}.
That is,
    $$
    \big\| \widehat{fd\sigma} \big\|_{L^4(\mathbb{R}^3)}
    \leq 2\pi \| f \|_{L^2(S^{2})}
    $$
where
    $$
    \widehat{fd\sigma}(x) = \int_{S^{n-1}} e^{-ix\cdot\xi}f(\xi)d\sigma(\xi).
    $$
Interpolating this with a trivial estimate
    $$
    \big\| \widehat{fd\sigma} \big\|_{L^{\infty}(\mathbb{R}^3)}
    \leq \| f \|_{L^1(S^{2})}
    \leq \sqrt{4\pi} \| f \|_{L^2(S^{2})},
    $$
we get
    $$
    \big\| \widehat{fd\sigma} \big\|_{L^6(\mathbb{R}^3)}
    \leq 2^{1/6} (2\pi)^{5/6} \| f \|_{L^2(S^{2})}.
    $$
By H\"older's inequality, we have
    $$
    \begin{aligned}
    \Big( \int_{\mathbb{R}^3} \Big| \int_{S^2} e^{ix\cdot\xi} \widehat{f}(\xi) d\sigma(\xi) \Big|^2 |V(x)|dx \Big)
    &\leq \Big\| \int_{S^2} e^{ix\cdot\xi} \widehat{f}(\xi) d\sigma(\xi) \Big\|_{L^6}^2 \| V \|_{L^{3/2}} \\
    &\leq 2^{1/3} (2\pi)^{5/3} \| V \|_{L^{3/2}} \| \widehat{f} \|_{L^2(S^{2})}^2.
    \end{aligned}
    $$
So we get
    $$
    \begin{aligned}
    \big\| e^{it\Delta}f \big\|_{L^2_{x,t} (|V|)}^2
    &\leq \frac{\pi}{(2\pi)^{6}} 2^{1/3} (2\pi)^{5/3} \| V \|_{L^{3/2}} \| \widehat{f} \|_{L^2}^2 \\
    &= \frac{1}{2\pi^{1/3}} \| V \|_{L^{3/2}} \| f \|_{L^2}^2 .
    \end{aligned}
    $$
By the argument as in the proof of Theorem \ref{main_thm_1}, we have
    $$
    \begin{aligned}
    \big\| e^{itH}f \big\|_{L^2_{x,t} (|V|)}
    &\leq \big\| e^{it\Delta}f \big\|_{L^2_{x,t} (|V|)}
        + \Big\| \int_{0}^{t}e^{i(t-s)\Delta} V(x) e^{isH}f ds \Big\|_{L^2_{x,t} (|V|)} \\
    &\leq \frac{1}{\sqrt{2}\pi^{1/6}} \| V \|_{L^{3/2}}^{\frac{1}{2}} \| f \|_{L^2}
        + \frac{1}{2\pi^{1/3}} \| V \|_{L^{3/2}} \big\| V(x) e^{itH}f \big\|_{L^2_{x,t}(|V|^{-1})} \\
    &= \frac{1}{\sqrt{2}\pi^{1/6}} \| V \|_{L^{3/2}}^{\frac{1}{2}} \| f \|_{L^2}
        + \frac{1}{2\pi^{1/3}} \| V \|_{L^{3/2}} \| e^{itH}f \|_{L^2_{x,t}(|V|)}.
    \end{aligned}
    $$
Thus, if $\| V \|_{L^{3/2}} < 2\pi^{1/3}$, then 
    \begin{equation}\label{const_WR}
    \big\| e^{itH}f \big\|_{L^2_{x,t} (|V|)}
    \leq C_V \| f \|_{L^2}.
    \end{equation}
Using \eqref{const_WR} instead of \eqref{RV2_result} in that  argument,
the proof of Theorem \ref{main_thm_1'} is complete.


\section{The equivalence of two norms involving $H$ and $-\Delta$ in $\mathbb{R}^3$}\label{sec_CKN_ineq}

In this section,
we investigate some conditions on $H$ and $p$ with which the equivalence
$$
\| H^{\frac{1}{4}} f \|_{L^p(\mathbb{R}^3)} \approx C_{H,p} \big\| (-\Delta)^{\frac{1}{4}} f \big\|_{L^p(\mathbb{R}^3)}
$$
holds.
This equivalence was studied in \cite{DFVV} and \cite{CD} that are of independent interest.
We now introduce such an equivalence in a form for $n=3$, which enables us to include the endpoint estimate also for that dimension.

\begin{prop}\label{Key_Prop}
	Given $A \in L^2_{loc}(\mathbb{R}^3; \mathbb{R}^3)$ and $V: \mathbb{R}^3 \rightarrow \mathbb{R}$ measurable,
	assume that the operators $\Delta_A = -(\nabla -iA)^2$ and $H= -\Delta_A +V$ are self-adjoint and positive on $L^2$ and that \eqref{Cond_BMO} holds.
	Moreover, assume that $V_+$ is of Kato class and that $A$ and $V$ satisfy \eqref{Kato_class} and
	\begin{equation}\label{upper_bdd_of_AV}
	|A(x)|^2 + |\nabla\cdot A(x)| + |V(x)|
	\leq C_0 \min\Big( \frac{1}{|x|^{2-\epsilon}}, \frac{1}{|x|^{2+\epsilon}} \Big)
	\end{equation}
	for some $0<\epsilon\leq2$ and $C_0>0$.
	Then the following estimates hold:
	\begin{equation}\label{key_leq_ineq}
	\| H^{\frac{1}{4}} f \|_{L^p} \leq C_{\epsilon,p} C_0 \| (-\Delta)^{\frac{1}{4}} f \|_{L^p},
	\quad 1<p \leq 6,
	\end{equation}
	\begin{equation}\label{key_geq_ineq}
	\| H^{\frac{1}{4}} f \|_{L^p} \geq C_p \| (-\Delta)^{\frac{1}{4}} f \|_{L^p},
	\quad \frac{4}{3}<p<4.
	\end{equation}
\end{prop}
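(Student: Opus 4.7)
The plan is to use Stein's complex interpolation theorem applied to suitable analytic families of operators. For the upper bound \eqref{key_leq_ineq}, introduce $T_z = H^z (-\Delta)^{-z}$ on a vertical strip in $\mathbb{C}$; the target then reduces to the $L^p \to L^p$ boundedness of $T_{1/4}$ for $1 < p \le 6$. For the lower bound \eqref{key_geq_ineq}, the analogous role is played by the reversed family $S_z = (-\Delta)^z H^{-z}$, with target $S_{1/4} : L^p \to L^p$ for $4/3 < p < 4$.

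The argument for $T_z$ rests on two endpoint estimates. On the imaginary axis $\mathrm{Re}(z) = 0$, I would show that $T_{iy}$ is bounded on $BMO$ and on every $L^r$ with $1 < r < \infty$, in both cases with polynomial growth $(1+|y|)^{3/2}$. The $BMO$ bound for $H^{iy}$ is furnished by \eqref{Cond_BMO}, after identifying $BMO_H$ with the classical $BMO$ through the properties of $BMO_H$ developed earlier in this section, and is paired with the classical H\"ormander--Mikhlin bound for $(-\Delta)^{-iy}$; the $L^r$ bound follows from Gaussian upper bounds on the heat kernel of $H$, which themselves are consequences of \eqref{Kato_class}, the pointwise decay \eqref{upper_bdd_of_AV}, and the diamagnetic inequality. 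At the real endpoint $\mathrm{Re}(z) = 1/2$, the quadratic form identity
$$
\|H^{1/2} g\|_{L^2}^2 = \|\nabla_A g\|_{L^2}^2 + \langle V g, g \rangle
$$
combined with Hardy's inequality $\||x|^{-1} g\|_{L^2} \le C\|\nabla g\|_{L^2}$ and the decay \eqref{upper_bdd_of_AV} yields $T_{1/2}: L^2 \to L^2$ with operator norm $\lesssim C_0^{1/2}$. Stein's interpolation between these two endpoints (combined with its dualized counterpart in order to cover both sides of $p = 2$) then produces $T_{1/4}: L^p \to L^p$ for $1 < p \le 6$, with constant proportional to $C_0$. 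For $S_z$ the analogous scheme applies, with the real-endpoint bound now given by the reverse comparison $\|\nabla g\|_{L^2} \le C\|H^{1/2} g\|_{L^2}$, which follows from the positivity of $H$ together with the Kato-class smallness of $\|V_-\|_K$ built into \eqref{Kato_class}; the narrower range $4/3 < p < 4$ reflects the fact that the real endpoint available for $S_z$ is more restrictive.

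The main obstacle is reaching the closed upper endpoint $p = 6$ in \eqref{key_leq_ineq}: standard $L^p$-to-$L^p$ interpolation via Stein typically yields only open intervals of $p$, so the endpoint $p = 6$ requires a genuine $BMO$-endpoint interpolation rather than a limiting $L^p$ argument. A further technical hurdle is linking the hypothesis $H^{iy} : BMO \to BMO_H$ from \eqref{Cond_BMO} with the classical $BMO$ bound demanded by the interpolation, which is precisely the role played by the $BMO_H$ theory developed earlier in this section.
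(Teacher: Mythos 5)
Your interpolation scheme for the upper bound \eqref{key_leq_ineq} cannot reach the endpoint $p=6$. You place the real-axis estimate for the family $T_z = H^z(-\Delta)^{-z}$ at $\mathrm{Re}(z)=1/2$, using the quadratic-form identity and Hardy's inequality to obtain $T_{1/2}:L^2\to L^2$. But Stein interpolation on the strip $0\le \mathrm{Re}(z)\le 1/2$ between the pair $\{\,L^\infty\to BMO \text{ at } \mathrm{Re}(z)=0,\ L^2\to L^2 \text{ at } \mathrm{Re}(z)=1/2\,\}$ evaluated at $z=1/4$ (so $\theta = 1/2$) gives $1/p_\theta = \theta\cdot\tfrac12 = \tfrac14$, i.e.\ $p=4$, not $p=6$. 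Replacing the $BMO$ edge by the $L^r\to L^r$ bounds at $\mathrm{Re}(z)=0$ only produces $4/3<p<4$. So the range you actually obtain for $T_{1/4}$ is $4/3<p\le 4$ --- which, ironically, is the target range for the \emph{lower} bound \eqref{key_geq_ineq}, not the range $1<p\le 6$ claimed in \eqref{key_leq_ineq}.

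The paper instead works on the strip $0\le\mathrm{Re}(z)\le 1$ and proves the substantially stronger endpoint estimate $T_1 = H(-\Delta)^{-1}: L^p\to L^p$ for $1<p\le 3/2$ (equation \eqref{x=1_goal}). Interpolating the $L^\infty\to BMO_H$ bound at $\mathrm{Re}(z)=0$ against $L^{3/2}\to L^{3/2}$ at $\mathrm{Re}(z)=1$, evaluated at $z=1/4$ (so $\theta=1/4$), yields $1/p = \theta\cdot\tfrac23 = \tfrac16$, i.e.\ the closed endpoint $p=6$. The content you are missing is precisely this $L^{3/2}$ estimate: by the decomposition \eqref{expend_H}, it reduces to bounding $\|A\cdot\nabla f\|_{L^r}$ (via Lorentz-space H\"older with $A\in L^{3,\infty}$ and Sobolev embedding, valid for $1<r<3$) together with $\|(|A|^2+|\nabla\cdot A|+V)f\|_{L^r}\lesssim \|\Delta f\|_{L^r}$ for $1<r\le 3/2$. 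The latter at the delicate endpoint $r=3/2$ is where the paper invokes the Sawyer--Wheeden two-weight Sobolev inequality (Lemma \ref{lem_SW}), exploiting the quantitative decay hypothesis \eqref{upper_bdd_of_AV} via the weighted estimate of Lemma \ref{lem_CKN_modi_2}. Your sketch contains no analogue of this step, and without it the required $L^{3/2}$ endpoint at $\mathrm{Re}(z)=1$ is simply not available to your interpolation.

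A secondary point: the paper proves only \eqref{key_leq_ineq} and cites \cite[Theorem~1.2]{DFVV} for \eqref{key_geq_ineq}, whereas you sketch a proof of both. Your outline for \eqref{key_geq_ineq} via $S_z=(-\Delta)^zH^{-z}$ and a reverse comparison at $\mathrm{Re}(z)=1/2$ is in the right spirit and does give the stated range $4/3<p<4$, so no objection there. Also, a small presentational remark: you speak of ``identifying $BMO_H$ with the classical $BMO$,'' but the paper does not make such an identification; it composes $T_z$ with the sharp maximal function $M^{\#}_H$ to form $\widetilde T_z$ and interpolates the latter in $L^p$, using \eqref{Lp_BMO_prop} to pass back. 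This distinction matters because $BMO_H$ and $BMO$ need not coincide under the stated hypotheses.
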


In showing this, we only prove \eqref{key_leq_ineq} as
estimate \eqref{key_geq_ineq} is the same as \cite[Theorem 1.2]{DFVV}.
When $1<p<6$, estimate \eqref{key_leq_ineq} easily follows from the Sobolev embedding theorem.
However, in order to extend the range of $p$ up to $6$,
we need a precise estimate which depends on $\epsilon$ in \eqref{upper_bdd_of_AV}.
Towards this, we introduce a weighted Sobolev inequality as below.

\begin{lem}\label{lem_SW}
	\textbf{(Theorem 1(B) in \cite{SW2}).}
	Suppose $0<\alpha<n$, $1<p<q<\infty$ and $v_1(x)$ and $v_2(x)$ are nonnegative measurable functions on $\mathbb{R}^n$.
	Let $v_1(x)$ and $v_2(x)^{1-p'}$ satisfy the reverse doubling condition:
	there exist $\delta,\epsilon \in (0,1)$ such that
	$$
	\int_{\delta Q} v_1(x) \leq \epsilon\int_Q v_1(x)dx
	\quad\mbox{for all cubes}\quad Q\subset \mathbb{R}^n.
	$$
	Then the inequality
	$$
	\bigg( \int_{\mathbb{R}^n} |f(x)|^q v_1(x) dx \bigg)^{\frac{1}{q}}
	\leq C \bigg( \int_{\mathbb{R}^n} \big|(-\Delta)^{\alpha/2} f(x) \big|^p v_2(x) dx \bigg)^{\frac{1}{p}}
	$$
	holds if and only if
	$$
	|Q|^{\frac{\alpha}{n}-1} \Big( \int_Q v_1(x) dx \Big)^{\frac{1}{q}} \Big( \int_Q v_2(x)^{1-p'} dx \Big)^{\frac{1}{p'}} \leq C
	\quad\mbox{for all cubes}\quad Q\subset \mathbb{R}^n.
	$$
\end{lem}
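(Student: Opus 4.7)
The plan is to establish \eqref{key_leq_ineq} only, since as the authors remark the reverse inequality \eqref{key_geq_ineq} is quoted from \cite{DFVV}. Setting $g = (-\Delta)^{1/4} f$, the target bound is equivalent to the $L^p$-boundedness of the operator $T := H^{1/4}(-\Delta)^{-1/4}$ for $1 < p \leq 6$. I will split the argument into two regimes: the non-endpoint range $1 < p < 6$, where the subcritical Sobolev embedding $\dot H^{1/2}(\mathbb R^3) \hookrightarrow L^{p^*}$ (with $\tfrac{1}{p^*} = \tfrac{1}{p} - \tfrac{1}{6}$) is available, and the endpoint $p = 6$, which is precisely where this embedding saturates and for which Lemma \ref{lem_SW} has been quoted.

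For $1 < p < 6$, I will proceed via the subordination identity
\begin{equation*}
H^{1/4} f - (-\Delta)^{1/4} f \;=\; c \int_0^\infty t^{-5/4} \bigl( e^{t\Delta} - e^{-tH} \bigr) f \, dt,
\end{equation*}
expand the bracketed difference by Duhamel to extract factors of $W := H - (-\Delta) = -2iA\cdot\nabla - i\nabla\cdot A + |A|^2 + V$, and use the $L^p$-contractivity of $e^{-tH}$ (a consequence of the Kato-class hypothesis \eqref{Kato_class}) together with standard heat-kernel bounds for $e^{t\Delta}$ to reduce the matter to an inequality of the form $\|W h\|_{L^p} \lesssim \|(-\Delta)^{1/4} h\|_{L^p}$. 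By H\"older and the subcritical Sobolev embedding, this in turn follows from the envelope \eqref{upper_bdd_of_AV} for $|A|^2 + |\nabla \cdot A| + |V|$ combined with a Hardy-type inequality for the gradient contribution in $W$.

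For the endpoint $p = 6$, the same scheme is run, but with the subcritical Sobolev embedding replaced by Lemma \ref{lem_SW} applied with $n = 3$, $\alpha = 1/2$, $p = q = 6$, and a weight pair $(v_1, v_2)$ chosen so that $v_1(x) \gtrsim |A(x)|^2 + |\nabla \cdot A(x)| + |V(x)|$ pointwise, while $v_2$ is tuned so that the testing condition
\begin{equation*}
|Q|^{-5/6}\Bigl(\int_Q v_1 \, dx \Bigr)^{1/6} \Bigl(\int_Q v_2^{-1/5} \, dx \Bigr)^{5/6} \leq C
\end{equation*}
of Lemma \ref{lem_SW} holds uniformly over all cubes $Q \subset \mathbb R^3$. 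Combined with the subordination-Duhamel decomposition, the resulting weighted Sobolev bound $\|f\|_{L^6(v_1)} \lesssim \|(-\Delta)^{1/4} f\|_{L^6(v_2)}$ closes the argument at $p = 6$.

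The main obstacle will be the verification of the Sawyer-Wheeden testing condition at the endpoint. The envelope $\min(|x|^{-2+\epsilon}, |x|^{-2-\epsilon})$ is not globally integrable on $\mathbb R^3$ unless $\epsilon > 1$, so the naive choice $v_2 \equiv 1$ fails the testing condition on large cubes straddling the origin. The fix is to allow $v_2$ to grow mildly at infinity and vanish mildly at the origin, at rates dictated by $\epsilon$; the reverse-doubling hypothesis of Lemma \ref{lem_SW} must then be checked for both $v_1$ and $v_2^{1-p'}$, and the explicit $\epsilon$-dependence of the constant $C_{\epsilon, p}$ in \eqref{key_leq_ineq} reflects exactly this tuning of the weight pair against the two-sided decay in \eqref{upper_bdd_of_AV}.
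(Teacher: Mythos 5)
The statement you were asked to prove is Lemma~\ref{lem_SW} itself, i.e., the Sawyer--Wheeden two-weight characterization of when the fractional integral $(-\Delta)^{-\alpha/2}$ maps $L^p(v_2)$ into $L^q(v_1)$. The paper offers no proof of this: it is imported verbatim as Theorem~1(B) of \cite{SW2}, and its proof lives entirely in that reference (a good-$\lambda$/testing-condition argument for two-weight norm inequalities on homogeneous spaces). Your proposal does not address this statement at all. Instead it sketches a proof of estimate \eqref{key_leq_ineq} from Proposition~\ref{Key_Prop}, and in doing so it \emph{invokes} Lemma~\ref{lem_SW} as a black box. Relative to the assigned task this is circular: you are using the result you were supposed to establish, and proving a different one.

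Even judged as a proof of Proposition~\ref{Key_Prop}, your route diverges from the paper's (which interpolates the analytic family $T_z = H^z(-\Delta)^{-z}$ between a $BMO_H$ bound at $\mathrm{Re}\,z=0$ and the bound $\|Hf\|_{L^p}\lesssim\|\Delta f\|_{L^p}$ for $1<p\le \tfrac32$ at $\mathrm{Re}\,z=1$, the latter obtained from Lemma~\ref{lem_CKN_modi_2} with $\alpha=2$ and $p=\tfrac32<q$), and it contains a concrete error: you propose to apply Lemma~\ref{lem_SW} with $p=q=6$, but the lemma's hypotheses require $1<p<q<\infty$ with strict inequality, so that application is not legitimate. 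If the intended exercise was to supply a proof of Lemma~\ref{lem_SW}, you would need to reproduce the Sawyer--Wheeden argument (necessity of the testing condition by inserting $f$ adapted to a cube $Q$; sufficiency via a dyadic decomposition of $I_\alpha$ and the reverse doubling hypothesis), none of which appears in your write-up.
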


From Lemma \ref{lem_SW}, we obtain a weighted estimate as follows.

\begin{lem}\label{lem_CKN_modi_2}
	Let $f$ be a $C_0^{\infty}(\mathbb{R}^3)$ function,
	and suppose that a nonnegative weight function $w$ satisfies
	\begin{equation}\label{CKN_pf_3}
	w(x) \leq \min\Big( \frac{1}{|x|^{2-\epsilon}}, \frac{1}{|x|^{2+\epsilon}} \Big)
	\end{equation}
	for some $0<\epsilon\leq2$. Then, for any $1<p \leq\frac{3}{2}$, we have
	$$
	\| f w \|_{L^p} \leq C_{\epsilon,p} \| \Delta f \|_{L^p}.
	$$
\end{lem}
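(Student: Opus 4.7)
The plan is to separate the range into the sub-endpoint $1 < p < 3/2$, where an elementary H\"older plus Sobolev embedding argument suffices, and the endpoint $p = 3/2$, where Lemma~\ref{lem_SW} becomes essential. First I would reduce to the extremal weight $w(x) = \min\bigl(|x|^{-(2-\epsilon)}, |x|^{-(2+\epsilon)}\bigr)$, since the claim is monotone in $w$, and observe that $w \in L^{3/2}(\mathbb{R}^3)$ precisely because $\epsilon > 0$, with the quantitative bound $\|w\|_{L^{3/2}}^{3/2} \lesssim \epsilon^{-1}$ coming from a direct integration.

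For $1 < p < 3/2$, H\"older's inequality yields $\|fw\|_{L^p} \leq \|w\|_{L^{3/2}}\,\|f\|_{L^{3p/(3-2p)}}$, and the Sobolev embedding $\|f\|_{L^{3p/(3-2p)}} \leq C_p\|\Delta f\|_{L^p}$---valid because $3p/(3-2p)$ is the Sobolev conjugate of $p$ for second-order derivatives in dimension $3$---closes the bound. This route fails exactly at $p = 3/2$, where the Sobolev conjugate becomes $\infty$ and the embedding only reaches $BMO$.

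At the endpoint $p = 3/2$ I would instead invoke Lemma~\ref{lem_SW} with $\alpha = 2$, $n = 3$, $q = 3/2$, $v_1 = w^{3/2}$, an auxiliary exponent $p_0 \in (1,3/2)$ (to respect the strict constraint $p_0 < q$), and a weight $v_2$ arranged so that H\"older produces
$$
\Big(\int |\Delta f|^{p_0} v_2\,dx\Big)^{1/p_0} \leq \|v_2\|_{L^{3/(3-2p_0)}}^{1/p_0}\,\|\Delta f\|_{L^{3/2}}.
$$
A natural candidate for $v_2$ inherits the piecewise profile of $w$---power-singular at the origin and power-decaying at infinity, with exponents tuned to $p_0$ and $\epsilon$---and the Sawyer--Wheeden testing condition
$$
|Q|^{-1/3}\Big(\int_Q w^{3/2}\,dx\Big)^{2/3}\Big(\int_Q v_2^{1-p_0'}\,dx\Big)^{1/p_0'} \leq C
$$
would then be checked by decomposing cubes $Q$ according to whether they concentrate near the origin, straddle it, or lie far from it, plugging in the piecewise bounds on $w$ and $v_2$ in each case; the reverse-doubling of $v_1 = w^{3/2}$ required by Lemma~\ref{lem_SW} is automatic from its polynomial profile. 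The main obstacle is exactly this endpoint verification: the Sawyer--Wheeden condition and the $L^{3/(3-2p_0)}$-integrability of $v_2$ push the exponents of $v_2$ in opposite directions across the critical value $3 - 2p_0$, so the strict positivity of $\epsilon$---equivalently, the positive gap between the two decay rates of $w$---is what opens a non-empty window of auxiliary exponents $p_0 \in (1,3/2)$ in which both constraints can simultaneously be met.
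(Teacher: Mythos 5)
Your handling of the sub-endpoint range $1<p<3/2$ is correct and essentially matches the paper's: H\"older against $w$ (the paper uses the slightly stronger pairing of $\| |x|^{-2}\|_{L^{3/2,\infty}}$ with a Lorentz--Sobolev embedding, but the two are interchangeable here).

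At the endpoint $p=3/2$ you identify the right tool (Lemma~\ref{lem_SW}), but your parametrization of it leaves the hard part undone, and you say so yourself: you set $q_{\mathrm{SW}}=3/2$ and an auxiliary $p_0<3/2$ with a nontrivial weight $v_2$, and then the entire burden of the proof lands on verifying the Sawyer--Wheeden testing condition
\[
|Q|^{-1/3}\Big(\int_Q w^{3/2}\,dx\Big)^{2/3}\Big(\int_Q v_2^{1-p_0'}\,dx\Big)^{1/p_0'}\leq C
\]
for a piecewise-power $v_2$, together with $v_2\in L^{3/(3-2p_0)}$. Since $1-p_0'<0$, the factor $v_2^{1-p_0'}$ is \emph{large} where $v_2$ is small, while the integrability requirement wants $v_2$ small at infinity; these push in opposite directions, and a case analysis over cubes near the origin, straddling it, and far from it is genuinely required to see whether the window of admissible $(p_0,v_2)$ is non-empty. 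You flag this as ``the main obstacle'' and then assert, without computation, that $\epsilon>0$ opens the window. That assertion is precisely what a proof would have to supply; as written this is a gap.

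The paper avoids this entirely by choosing the \emph{other} free parameter. It first uses an elementary H\"older/interpolation step, exploiting $w^{3/2}\in L^1(\mathbb{R}^3)$, to pass from $L^{3/2}(w^{3/2}\,dx)$ up to $L^{q}(w^{3/2}\,dx)$ for some $q>3/2$:
\[
\Big(\int |f|^{3/2}w^{3/2}\,dx\Big)^{2/3}\leq C_{\epsilon,q}\Big(\int |f|^{q}w^{3/2}\,dx\Big)^{1/q}.
\]
It then applies Lemma~\ref{lem_SW} with $p_{\mathrm{SW}}=3/2$, $q_{\mathrm{SW}}=q$, $v_1=w^{3/2}$ and the \emph{trivial} weight $v_2\equiv 1$. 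With $v_2\equiv1$ the testing condition collapses to
\[
|Q|^{-1/3}\Big(\int_Q w^{3/2}\,dx\Big)^{1/q}|Q|^{1/3}=\Big(\int_Q w^{3/2}\,dx\Big)^{1/q}\leq \|w\|_{L^{3/2}}^{3/(2q)},
\]
which is immediate from $w\in L^{3/2}$, and the reverse-doubling hypotheses are likewise easy. The lesson is that the extra H\"older step converts a weighted testing-condition computation into a one-line integrability check. If you want to pursue your route, you would need to actually carry out the cube-by-cube verification; otherwise you should incorporate the paper's interpolation step so that $v_2\equiv1$ does the work for you.
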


\begin{proof}
	For all $1<p<\frac{3}{2}$, we directly get
	\begin{equation}\label{CKN_inter_1}
	\big\| \frac{1}{|x|^2} f \big\|_{L^p}
	\leq C \big\| \frac{1}{|x|^2} \big\|_{L^{\frac{3}{2},\infty}} \| f \|_{L^{\frac{3p}{3-2p},p}}
	\leq C \| \Delta f \|_{L^p}
	\end{equation}
	from H\"older's inequality in Lorentz spaces and the  Sobolev embedding theorem.
	For $p=\frac{3}{2}$, by H\"older's inequality, we get
	\begin{equation*}
	\bigg( \int_{\mathbb{R}^3} |f(x)|^{\frac{3}{2}} w(x)^{\frac{3}{2}} dx \bigg)^{\frac{2}{3}}
	\leq \bigg( \int_{\mathbb{R}^3} |f(x)|^{q} w(x)^{(1-\theta)q} dx \bigg)^{\frac{1}{q}}
	\bigg( \int_{\mathbb{R}^3} w(x)^{\frac{3q}{2q-3}\theta} dx \bigg)^{\frac{2q-3}{3q}}
	\end{equation*}
	for any $\frac{3}{2}< q<\infty$ and $0<\theta<1$.
	Taking $\theta = 1- \frac{3}{2q}$, we have
	\begin{equation*}
	\bigg( \int_{\mathbb{R}^3} |f(x)|^{\frac{3}{2}} w(x)^{\frac{3}{2}} dx \bigg)^{\frac{2}{3}}
	\leq C_{\epsilon,q} \bigg( \int_{\mathbb{R}^3} |f(x)|^{q} w(x)^{\frac{3}{2}} dx \bigg)^{\frac{1}{q}}
	\end{equation*}
	because of \eqref{CKN_pf_3}.
	Thus, using Lemma \ref{lem_SW} with $\alpha=2$, $(p,q)=(\frac{3}{2},q)$, $v_1(x)= w(x)^{\frac{3}{2}}$ and $v_2(x)\equiv 1$, we have
	\begin{equation}\label{CKN_inter_2}
	\bigg( \int_{\mathbb{R}^3} |f(x)|^{\frac{3}{2}} w(x)^{\frac{3}{2}} dx \bigg)^{\frac{2}{3}}
	\leq C_{\epsilon,q} \bigg( \int_{\mathbb{R}^3} |\Delta f(x) |^{\frac{3}{2}} w(x)^{\frac{3}{2}} dx \bigg)^{\frac{2}{3}}.
	\end{equation}
	Combining \eqref{CKN_inter_1} and \eqref{CKN_inter_2}, the proof is complete.
\end{proof}

Finally, we prove Proposition \ref{Key_Prop}.
We use Stein's interpolation theorem to the analytic family of operators $T_z= H^z \cdot (-\Delta)^{-z}$, where $H^z$ and $(-\Delta)^{-z}$ are defined by the spectral theory.
Denoting $z=x +iy$, we can decompose
$$
T_z = T_{x+iy} = H^{iy} H^x (-\Delta)^{-x} (-\Delta)^{-iy}.
$$
In fact, the operators $H^{iy}$ and $(-\Delta)^{-iy}$ are bounded according to the following result.

\begin{lem}\label{Im_est_operator}
	\textbf{(Proposition 2.2 in \cite{DFVV}).}
	Consider the self-adjoint and positive operators $-\Delta_A$ and $H= -\Delta_A +V$ on $L^2$.
	Assume that $A \in L^2_{loc}(\mathbb{R}^3; \mathbb{R}^3)$ and that the positive and negative parts $V_{\pm}$ of $V$ satisfy:
	$V_{+}$ is of Kato class
	and
	$$
	\| V_- \|_K < \frac{\pi^{3/2}}{\Gamma(1/2)}.
	$$
	Then for all $y\in \mathbb{R}$, the imaginary powers $H^{iy}$ satisfy the $(1,1)$ weak type estimate
	$$
	\| H^{iy} \|_{L^1\rightarrow L^{1,\infty}} \leq C (1+|y|)^{\frac{3}{2}}.
	$$
\end{lem}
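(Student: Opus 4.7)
The plan is to establish the weak-type $(1,1)$ bound for $H^{iy}$ by fitting $H$ into the framework of non-negative self-adjoint operators whose heat semigroup satisfies a Gaussian upper bound, and then running a Calder\'on--Zygmund argument adapted to oscillatory spectral multipliers in the spirit of Hebisch and Duong--McIntosh. The argument splits cleanly into two stages: first, derive Gaussian pointwise bounds on the kernel of $e^{-tH}$; second, pass from those bounds to a weak endpoint estimate for $H^{iy}$ with explicit polynomial tracking in the parameter $y$.

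For the first stage, I would combine the diamagnetic inequality with Khas'minskii's lemma. The Feynman--Kac--It\^o representation gives the pointwise majorization $|e^{-tH}f(x)| \leq e^{t\Delta - tV_-}|f|(x)$, which reduces the analysis to a scalar Schr\"odinger heat kernel with potential $-V_-$ (the Kato-class $V_+$ only helps). Khas'minskii's lemma produces
\begin{equation*}
\sup_x \mathbb{E}^x\Big[\exp\Big(\int_0^t V_-(B_s)\,ds\Big)\Big] < \infty
\end{equation*}
precisely when $c_n \|V_-\|_K < 1$, with $c_n = \Gamma(n/2-1)/(4\pi^{n/2})$ the constant of the Newtonian potential. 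In $\mathbb{R}^3$ this threshold reads exactly $\|V_-\|_K < \pi^{3/2}/\Gamma(1/2)$, matching the hypothesis. Iterating this against the free Gaussian kernel yields
\begin{equation*}
|e^{-tH}(x,y)| \leq C\, t^{-3/2} \exp\bigl(-|x-y|^2/(ct)\bigr), \qquad t>0,\ x,y\in\mathbb{R}^3.
\end{equation*}

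For the second stage, I would start from the Mellin-type representation
\begin{equation*}
H^{iy} = \frac{1}{\Gamma(1-iy)} \int_0^\infty t^{-iy}\, H e^{-tH}\, dt,
\end{equation*}
which makes $H^{iy}$ an $L^2$-isometry by the spectral theorem. To upgrade to weak-$(1,1)$, perform a Calder\'on--Zygmund decomposition $f = g + \sum_j b_j$ of $f \in L^1$ at height $\alpha$, with each $b_j$ supported in a Whitney cube $Q_j$ of sidelength $r_j$. Split $H^{iy} b_j = H^{iy} e^{-r_j^2 H} b_j + H^{iy}(I - e^{-r_j^2 H}) b_j$: the first summand is handled via the Gaussian bound of Stage (i), while the second has an integral kernel whose off-diagonal $L^2$ decay is extracted by a contour deformation of the $t$-integral into the complex half-plane, exploiting the oscillation in $t^{-iy}$.

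The principal obstacle is pinning down the dependence on $y$ so as to obtain precisely the factor $(1+|y|)^{3/2}$. The Gamma factor $|\Gamma(1-iy)|^{-1} \sim e^{\pi|y|/2}$ contributes exponential growth that must be cancelled by the contour shift; what survives is a stationary-phase-type polynomial whose order equals $n/2 = 3/2$ in dimension three, matching the claimed exponent. A secondary subtlety is that the magnetic term $A$ precludes direct Fourier-side manipulation, so one must substitute Davies--Gaffney $L^2$ off-diagonal estimates for $e^{-tH}$ throughout; fortunately these follow automatically from the Gaussian pointwise bound obtained in Stage (i), so no further structural hypothesis on $A$ beyond self-adjointness of $\Delta_A$ is required.
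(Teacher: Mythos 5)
Your two-stage plan --- a Gaussian pointwise bound on the kernel of $e^{-tH}$ via the diamagnetic inequality plus a Kato-class perturbation argument, followed by the Duong--McIntosh/Sikora--Wright Calder\'on--Zygmund machinery giving weak $(1,1)$ for imaginary powers with the $(1+|y|)^{n/2}$ growth --- is exactly the route the paper indicates, since it reduces the lemma to the explicit heat-kernel estimate $|p_t(x,y)|\leq \frac{(2t)^{-3/2}}{\pi^{3/2}-\Gamma(1/2)\|V_-\|_K}e^{-|x-y|^2/(8t)}$ and then cites \cite{Sim,SW,CD,DSY} for both stages. One small inaccuracy: the Khas'minskii threshold in $\mathbb{R}^3$ is $\|V_-\|_K<4\pi^{3/2}/\Gamma(1/2)$ (the Newtonian constant is $\Gamma(1/2)/(4\pi^{3/2})$, not $\Gamma(1/2)/\pi^{3/2}$), so the hypothesis does not match it ``exactly'' but is strictly stronger --- which is what lets the iteration retain the explicit constant above --- and your argument goes through unchanged.
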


Lemma \ref{Im_est_operator} follows from the pointwise estimate for the heat kernel $p_t(x,y)$ of the operator $e^{-tH}$ as
$$
\big| p_t(x,y) \big|
\leq \frac{(2t)^{-3/2}} {\pi^{3/2} - \Gamma(1/2)\|V_-\|_K} e^{-\frac{|x-y|^2}{8t}}.
$$
Regarding this  estimate, one may refer to some references  \cite{Sim, SW, CD, DSY}.

By Lemma \ref{Im_est_operator}, we get
\begin{equation}\label{x=0_est}
\| T_{iy} f\|_p \leq C (1+|y|)^3 \|f\|_p
\quad\mbox{for all}\quad 1<p<\infty.
\end{equation}
Then by \eqref{Cond_BMO}, we have
\begin{equation}\label{x=0_est_BMO}
\begin{aligned}
\| T_{iy} f \|_{BMO_H} 
&:= \big\| M^{\#}_{H} \big(H^{iy} (-\Delta)^{-iy} f \big) \big\|_{L^{\infty}} \\
&\leq C (1+|y|)^{\frac{3}{2}} \big\| (-\Delta)^{-iy} f \big\|_{BMO} 
\leq C (1+|y|)^3 \|f\|_{L^{\infty}},
\end{aligned}
\end{equation}
where
$$
M^{\#}_{H} f(x)
:= \sup_{r>0} \frac{1}{|B(x,r)|} \int_{B(x,r)} \big| f(y)-e^{-r^2H}f(y) \big|dy
<\infty.
$$

Next, consider the operator $T_{1+iy}$.
If
\begin{equation}\label{x=1_goal}
\| H (-\Delta)^{-1} f \|_{L^p} \leq C \| f \|_{L^p}
\quad\mbox{for all}\quad 1<p \leq \frac{3}{2},
\end{equation}
then by \eqref{x=0_est}, we get
\begin{equation}\label{x=1_est}
\| T_{1+iy} f \|_{L^p} \leq C \| f \|_{L^p}
\quad\mbox{for all}\quad 1<p \leq \frac{3}{2}.
\end{equation}
Taking $\widetilde{T}_z f := M^{\#}_{H} \big( T_{z}f\big)$ and applying \eqref{x=1_est} with a basic property\footnote{Some properties of the $BMO_L$ space can be found in \cite{DY}.}:
\begin{equation}\label{Lp_BMO_prop}
\| M^{\#}_{H} f \|_{L^p}
\leq C \|f\|_{L^p} \quad\mbox{for all}\quad 1<p\leq\infty,
\end{equation}
we have
\begin{equation}\label{x=1_est_2}
\| \widetilde{T}_{1+iy} f \|_{L^p} \leq C \| f \|_{L^p}
\quad\mbox{for all}\quad 1<p \leq \frac{3}{2}.
\end{equation}
So, applying Stein's interpolation theorem to \eqref{x=0_est_BMO} and \eqref{x=1_est_2}, we obtain
$$
\| \widetilde{T}_{1/4} f \|_{L^p} \leq C \| f \|_{L^p}
\quad\mbox{for all}\quad 1<p \leq 6,
$$
and using \eqref{Lp_BMO_prop} again, we have
$$
\| H^{1/4} f \|_{L^p} \leq C \| (-\Delta)^{1/4} f \|_{L^p}
\quad\mbox{for all}\quad 1<p \leq 6.
$$
Now, we handle the remaining part \eqref{x=1_goal}; that is, we wish to establish the estimate
$$
\| Hf \|_{L^p} \leq C \| \Delta f \|_{L^p}.
$$
For a Schwartz function $f$, we can write
\begin{equation}\label{expend_H}
Hf = -\Delta f +2iA\cdot \nabla f + (|A|^2 +i\nabla\cdot A +V)f.
\end{equation}
From H\"older's inequality in Lorentz spaces and the Sobolev embedding theorem, we get
$$
\| A \cdot \nabla f \|_{L^r}
\leq C \| A \|_{L^{3,\infty}} \| \nabla f \|_{L^{\frac{3r}{3-r},r}}
\leq C \| A \|_{L^{3,\infty}} \| \Delta f \|_{L^r}
$$
for all $1<r<3$.
On the other hand, applying Lemma \ref{lem_CKN_modi_2} to \eqref{upper_bdd_of_AV}, we get
$$
\big\| (|A|^2 +i\nabla\cdot A +V) f \big\|_{L^r}
\leq C C_0 \| \Delta f \|_{L^r}
$$
for all $1<r \leq\frac{3}{2}$.
Thus we have
$$
\| H f \|_{L^r} \leq C \| \Delta f \|_{L^r}
\quad\mbox{for all}\quad 1<r\leq \frac{3}{2},
$$
and this implies Proposition \ref{Key_Prop}.


\section{Proof of Theorem \ref{main_thm_2}}\label{sec_V=0}

In this final section, we prove Theorem \ref{main_thm_2}.
This part follows an argument in \cite{DFVV}.
Let $u$ be a solution to problem \eqref{mS_eq} of the magnetic Schr\"odinger equation in $\mathbb{R}^{n+1}$.
By \eqref{expend_H}, we can expand $H$ in \eqref{mS_eq}:
    $$
    H = -\Delta +2iA\cdot \nabla +|A|^2 +i\nabla\cdot A +V.
    $$
Thus, by Duhamel's principle and the Coulomb gauge condition \eqref{Cond_Coulomb},
we have a formal solution to \eqref{mS_eq} given by
    \begin{equation}\label{Duhamel_sol}
    u(x,t)= e^{itH}f(x) =e^{it\Delta}f(x) - i \int_{0}^{t}e^{i(t-s)\Delta} R(x,\nabla) e^{isH}f ds ,
    \end{equation}
where
    $$
    R(x,\nabla) = 2iA \cdot \nabla_{A} -|A|^2 + V.
    $$
From \cite{RV} and \cite{IK} (see also (3.4) in \cite{DFVV}),
it follows that for every admissible pair $(r,q)$,
    \begin{equation}\label{deri_inho_est}
    \Big\| |\nabla|^{\frac{1}{2}} \int_{0}^{t}e^{i(t-s)\Delta} F(\cdot,s) ds \Big\|_{L_t^q L_x^r}
    \leq C_{n,r,q} \sum_{j \in \mathbb{Z}} 2^{j/2} \| \chi_{C_j} F \|_{L^2_{x,t}},
    \end{equation}
where $C_j= \{ x: 2^j \leq |x| \leq 2^{j+1} \}$
and $\chi_{C_j}$ is the characteristic function of the set $C_j$.
Then, from \eqref{Duhamel_sol}, \eqref{KT_result} and \eqref{deri_inho_est}, we know
    $$
    \begin{aligned}
    \big\| |\nabla|^{\frac{1}{2}} u \big\|_{L^q_t L^r_x}
    &\leq \big\| |\nabla|^{\frac{1}{2}} e^{it\Delta}f \big\|_{L^q_t L^r_x} + \Big\| |\nabla|^{\frac{1}{2}} \int_{0}^{t}e^{i(t-s)\Delta} R(x,\nabla) e^{isH}f ds \Big\|_{L^q_t L^r_x}\\
    &\leq C_{n,r,q}\big\| |\nabla|^{1/2} f \big\|_{L^2_x} + C_{n,r,q} \sum_{j \in \mathbb{Z}} 2^{j/2} \Big\| \chi_{C_j} R(x,\nabla) e^{itH}f \Big\|_{L^2_{x,t}} .
    \end{aligned}
    $$
For the second term in the far right-hand side, we get
    $$
    \begin{aligned}
    &\Big\| \chi_{C_j} R(x,\nabla) e^{itH}f  \Big\|_{L^2_{x,t}} \\
    &\qquad\leq 2\Big\| \chi_{C_j} A\cdot \nabla_{A} e^{itH}f \Big\|_{L^2_{x,t}}
        + \Big\| \chi_{C_j} \big(|A|^2 +|V| \big) e^{itH}f \Big\|_{L^2_{x,t}}.
    \end{aligned}
    $$

Next, we will use a known result in \cite{FV},
which is a smoothing estimate for the magnetic Schr\"odinger equation.

\begin{lem}\label{Lamma_FV}
\textbf{(Theorems 1.9 and 1.10 in \cite{FV}).}
Assume $n\geq3$, $A$ and $V$ satisfy conditions \eqref{Cond_Coulomb}, \eqref{Cond_B_n=3} and \eqref{Cond_B_n>3}.
Then, for any solution $u$ to \eqref{mS_eq} with $f \in L^2$ and $-\Delta_{A} f \in L^2$,
the following estimate holds:
    $$
    \begin{aligned}
    &\sup_{R>0} \frac{1}{R} \int_{0}^{\infty} \int_{|x|\leq R} |\nabla_{A} u|^2 dxdt
    ~+~ \sup_{R>0} \frac{1}{R^2} \int_{0}^{\infty} \int_{|x|= R} |u|^2 d\sigma(x)dt \\
    &\qquad \leq C_{A} \| (-\Delta_{A})^\frac{1}{4} f \|_{L^2}^2.
    \end{aligned}
    $$
\end{lem}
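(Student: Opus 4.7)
The plan is to prove Lemma \ref{Lamma_FV} via a virial (Morawetz-type) multiplier identity adapted to the magnetic Schr\"odinger equation, following the strategy pioneered by Morawetz, Lin--Strauss and Kato--Yajima for the Laplacian and developed for the magnetic case by Fanelli--Vega in \cite{FV}. First, for each $R>0$ I would introduce the radial multiplier $\psi_R$ with $\psi_R(|x|) = \int_0^{|x|} \min(1, s/R)\, ds$, so that $\nabla \psi_R(x) = (x/|x|) \min(1, |x|/R)$, the Hessian $D^2 \psi_R$ is nonnegative with $\partial_r^2 \psi_R = R^{-1}\chi_{\{|x|\leq R\}}$, and $\Delta^2 \psi_R$ picks up a surface mass on $|x|=R$.

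Next I consider the virial functional $\mathcal{V}(t) = \mathrm{Im} \int_{\mathbb{R}^n} \bar u\, \nabla \psi_R \cdot \nabla_A u\, dx$ and compute $\frac{d}{dt}\mathcal{V}$ using $i\partial_t u = Hu$, integrating by parts and invoking the Coulomb gauge \eqref{Cond_Coulomb}. This yields a virial identity schematically of the form
$$
\tfrac{d}{dt} \mathcal{V}(t) = 2 \int D^2\psi_R(\nabla_A u, \overline{\nabla_A u}) - \tfrac{1}{2}\int (\Delta^2 \psi_R) |u|^2 - 2 \int (\partial_r \psi_R)\,\mathrm{Im}(\bar u\, B_\tau \cdot \nabla_A u) - \int (\partial_r \psi_R)(\partial_r V) |u|^2.
$$
For the chosen $\psi_R$, the first two terms are nonnegative and combine (after a Hardy step producing the factor $(n-1)(n-3)$ for $n\geq 4$) to give a positive bulk contribution of the form $\tfrac{c}{R}\int_{|x|\leq R}|\nabla_A u|^2 + \tfrac{c(n-1)(n-3)}{R^2}\int_{|x|=R}|u|^2 d\sigma$. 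The trapping and radial-potential terms involving $B_\tau$ and $(\partial_r V)_+$ are then controlled by Cauchy--Schwarz and absorbed using hypothesis \eqref{Cond_B_n>3} in dimension $n\geq 4$, where the explicit constant $2(n-1)(n-3)/3$ is arranged to leave positivity intact.

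Finally, I would integrate $\frac{d}{dt}\mathcal{V}$ over $t \in (0,\infty)$ and bound $|\mathcal{V}(t)|$ uniformly in $t$ and $R$ by $\|(-\Delta_A)^{1/4} f\|_{L^2}^2$, using Cauchy--Schwarz, the magnetic Hardy inequality $\int |u|^2/|x|^2 \lesssim \int |\nabla_A u|^2$, the spectral identity $\|(-\Delta_A)^{1/4}f\|_{L^2}^2 = \langle (-\Delta_A)^{1/2} f, f\rangle$ obtained from the self-adjoint functional calculus, and the conservation of the magnetic energy under $e^{itH}$. Taking the supremum over $R>0$ then delivers both pieces of the claimed estimate simultaneously.

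The main obstacle is the dimension $n=3$ case, where the Hardy constant $(n-1)(n-3)$ degenerates to $0$, so the surface contribution on $|x|=R$ is not directly controlled by the Hessian term. In that dimension the weight must be refined and the absorption carried out after integrating in $t$, at which point the natural smallness hypothesis takes the one-dimensional radial form \eqref{Cond_B_n=3}, with the constant $1/M$ precisely dictating how much slack remains for absorbing the trapping and radial-potential contributions. This reformulation of the smallness and the corresponding rechoice of $\psi_R$ is the substantive technical difference from the higher-dimensional argument.
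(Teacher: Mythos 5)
First, a contextual point: the paper does not prove this lemma at all --- it is quoted verbatim from Theorems 1.9 and 1.10 of Fanelli--Vega \cite{FV}, and your virial/Morawetz multiplier strategy is indeed the one used in that reference (its title is literally ``Magnetic virial identities\ldots''). Your description of the multiplier $\psi_R$, of the sign structure of $D^2\psi_R$ and $-\Delta^2\psi_R$ (including the surface mass on $\{|x|=R\}$ and the vanishing of the $(n-1)(n-3)$ bulk term at $n=3$), and of how \eqref{Cond_B_n=3}--\eqref{Cond_B_n>3} are used to absorb the $B_\tau$ and $(\partial_r V)_+$ contributions is an accurate reconstruction of that argument.

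There is, however, a genuine gap at the final step, where you bound the boundary term $\sup_t|\mathcal{V}(t)|$ by $\|(-\Delta_A)^{1/4}f\|_{L^2}^2$ ``using Cauchy--Schwarz \ldots and the spectral identity.'' Cauchy--Schwarz applied to $\mathcal{V}(t)=\mathrm{Im}\int\bar u\,\nabla\psi_R\cdot\nabla_A u$ with $|\nabla\psi_R|\le 1$ gives $|\mathcal{V}(t)|\le\|u\|_{L^2}\|\nabla_A u\|_{L^2}=\|u\|_{L^2}\|(-\Delta_A)^{1/2}u\|_{L^2}$, and the spectral theorem gives the inequality $\|(-\Delta_A)^{1/4}u\|_{L^2}^2\le\|u\|_{L^2}\|(-\Delta_A)^{1/2}u\|_{L^2}$ --- i.e., the quantity you produce \emph{dominates} the quantity you want, not the reverse (take $f$ a superposition of two pieces at widely separated $-\Delta_A$-frequencies to see the two sides are genuinely incomparable in the needed direction). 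Recovering the half derivative on the data requires an additional idea: one localizes $f$ dyadically in the spectrum of $-\Delta_A$, notes that for such pieces $\|f\|_{L^2}\|\nabla_A f\|_{L^2}\approx\|(-\Delta_A)^{1/4}f\|_{L^2}^2$, and then sums using almost-orthogonality of the localized evolutions in the smoothing norms (or an equivalent duality/resolvent formulation); this is where much of the actual work in \cite{FV} lies. A related imprecision: $e^{itH}$ conserves $\|H^{s}u\|_{L^2}$, not $\|(-\Delta_A)^{s}u\|_{L^2}$, so passing from $u(t)$ back to $f$ also needs an equivalence of the $H$- and $\Delta_A$-Sobolev norms. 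Finally, you correctly flag that $n=3$ requires a different multiplier and the one-dimensional radial condition \eqref{Cond_B_n=3}, but you leave that case --- the one the present paper actually needs for the endpoint --- essentially unproved.
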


From \eqref{Cond_2'} with Lemma \ref{Lamma_FV}, we have
    $$
    \begin{aligned}
    &\sum_{j \in \mathbb{Z}} 2^{j/2} \Big\| \chi_{C_j} A\cdot \nabla_{A} e^{itH}f \Big\|_{L^2_{x,t}} \\
    &\quad\leq \sum_{j \in \mathbb{Z}} 2^{j} \Big( \sup_{x\in C_j}|A| \Big)
        \Big( \frac{1}{2^{j+1}} \int_{0}^{\infty} \int_{|x|\leq 2^{j+1}} |\nabla_{A} u|^2 dxdt \Big)^{\frac{1}{2}}\\
    &\quad\leq \Big( \sum_{j \in \mathbb{Z}} 2^{j} \sup_{x\in C_j}|A| \Big)
        \Big( \sup_{R>0} \frac{1}{R} \int_{0}^{\infty} \int_{|x|\leq R} |\nabla_{A} u|^2 dxdt \Big)^{\frac{1}{2}}\\
    &\quad\leq C_{A,\epsilon} \big\| (-\Delta_{A})^\frac{1}{4} f \big\|_{L^2_x}
    \end{aligned}
    $$
and
    $$
    \begin{aligned}
    &\sum_{j \in \mathbb{Z}} 2^{j/2} \Big\| \chi_{C_j} \big(|A|^2 +|V| \big) e^{itH}f \Big\|_{L^2_{x,t}} \\
    &\quad\leq \sum_{j \in \mathbb{Z}} 2^{j/2} \Big( \sup_{x\in C_j} \big(|A|^2 +|V| \big) \Big)
        \Big( \int_{2^j}^{2^{j+1}} r^2 \int_{0}^{\infty}  \frac{1}{r^2} \int_{|x|= r} |u|^2 d\sigma_r(x) dt dr \Big)^{\frac{1}{2}}\\
    &\quad\leq \Big( \sum_{j \in \mathbb{Z}} 2^{2j} \sup_{x\in C_j} \big(|A|^2 +|V| \big) \Big)
        \Big( \sup_{R>0} \frac{1}{R^2} \int_{0}^{\infty} \int_{|x|= R} |u|^2 d\sigma_R(x)dt \Big)^{\frac{1}{2}}\\
    &\quad\leq C_{A,V,\epsilon} \big\| (-\Delta_{A})^\frac{1}{4} f \big\|_{L^2_x}.
    \end{aligned}
    $$
That is,
    $$
    \big\| |\nabla|^{\frac{1}{2}} e^{itH}f \big\|_{L^q_t L^r_x}
    \leq C_{n,r,q} \big\| |\nabla|^{1/2} f \big\|_{L^2_x} + C_{n,r,q,A,V,\epsilon} \big\| (-\Delta_{A})^\frac{1}{4} f \big\|_{L^2_x}.
    $$

First, consider the case $n=3$.
By \eqref{Cond_2'}, estimate \eqref{key_leq_ineq} in Proposition \ref{Key_Prop} holds for all $1<p\leq 6$.
(Here, $H= -\Delta_{A} +V$.)
Then by \eqref{key_geq_ineq} in Proposition \ref{Key_Prop}, we get
    \begin{equation}\label{commute_relation}
    \begin{aligned}
    \big\| H^{\frac{1}{4}} e^{itH}f \big\|_{L^{q}_{t}(\mathbb{R}; L^{r}_{x}(\mathbb{R}^3))}
    &\leq C \big\| |\nabla|^{\frac{1}{2}} e^{itH}f \big\|_{L^{q}_{t}(\mathbb{R}; L^{r}_{x}(\mathbb{R}^3))} \\
    &\leq C \big\| |\nabla|^{\frac{1}{2}} f \big\|_{L^2_x(\mathbb{R}^3)} + C \big\| (-\Delta_{A})^{\frac{1}{4}} f \big\|_{L^2_x(\mathbb{R}^3)} \\
    &\leq C \big\| H^\frac{1}{4} f \big\|_{L^2_x(\mathbb{R}^3)}
    \end{aligned}
    \end{equation}
for all admissible pairs $(r,q)$.
(It clearly includes the endpoint case $(n,r,q)=(3,6,2)$.)

Next, for the case $n\geq4$, we already know that
\eqref{key_leq_ineq} holds for $1<p<2n$ and that \eqref{key_geq_ineq} is valid for $\frac{4}{3}<p<4$ under the same conditions on $A$ and $V$
(see \cite[Theorem 1.2]{DFVV}).
Thus we can easily get the same bound as \eqref{commute_relation} for all admissible pairs $(r,q)$.

Since the operators $H^{\frac{1}{4}}$ and $e^{itH}$ commutes, we get
    $$
    \big\| e^{itH}f \big\|_{L^{q}_{t}(\mathbb{R}; L^{r}_{x}(\mathbb{R}^n))}
    \leq C \big\| f \big\|_{L^2_x(\mathbb{R}^n)}
    $$
from \eqref{commute_relation}, and this completes the proof.



\section*{Acknowledgment}

Y. Koh was partially supported by NRF Grant 2016R1D1A1B03932049 (Republic of Korea).
The authors thank the referee for careful reading of the manuscript and many invaluable comments.


\end{document}